\definecolor{darkgreen}{rgb}{0,0.5,0}
\definecolor{darkred}{rgb}{0.7,0,0}
\theoremstyle{plain}
\newtheorem{lemma}{Lemma}[section]
\newtheorem{thm}[lemma]{Theorem}
\newtheorem{prop}[lemma]{Proposition}
\newtheorem{cor}[lemma]{Corollary}
\theoremstyle{definition}
\newtheorem{rmk}[lemma]{Remark}
\numberwithin{equation}{section}
\newcommand{\al}{\alpha}
\newcommand{\Ga}{\Gamma}
\newcommand{\de}{\delta}
\newcommand{\Om}{\Omega}
\newcommand{\ka}{\kappa}
\newcommand{\si}{\sigma}
\newcommand{\Si}{\Sigma}
\renewcommand{\th}{\theta}
\newcommand{\R}{\ensuremath{{\mathbb R}}}
\newcommand{\Z}{\ensuremath{{\mathbb Z}}}
\newcommand{\upto}{\uparrow}
\newcommand{\norm}[1]{\Vert#1\Vert}  
\newcommand{\beq}{\begin{equation}}
\newcommand{\eeq}{\end{equation}}
\newcommand{\beqs}{\begin{equation*}}
\newcommand{\eeqs}{\end{equation*}}
\newcommand{\beqa}{\begin{equation}\begin{aligned}}
\newcommand{\eeqa}{\end{aligned}\end{equation}}
\newcommand{\beqas}{\begin{equation*}\begin{aligned}}
\newcommand{\eeqas}{\end{aligned}\end{equation*}}
\newcommand{\brmk}{\begin{rmk}}
\newcommand{\ermk}{\end{rmk}}
\newcommand{\partref}[1]{\hbox{(\csname @roman\endcsname{\ref{#1}})}}
\newcommand{\half}{\frac{1}{2}}
\newcommand{\thalf}{\tfrac{1}{2}}
\renewcommand{\i}{\text{i}}
\newcommand{\pt}{\partial_t}
\newcommand{\M}{\ensuremath{{\mathcal M}}_{-1}}
\newcommand{\abs}[1]{\vert#1\vert} 
\newcommand{\eps}{\varepsilon}
\newcommand{\Col}{\mathcal{C}}
\newcommand{\Rea}{\text{Re}}
\newcommand{\kg}{k_g}
\newcommand{\kgc}{k_{g_c}}
\newcommand{\kgn}{k_{g_0}}
\newcommand{\kgt}{k_{\tilde g}}
\newcommand{\bM}{{\partial M}}
\newcommand{\nanu}{\frac{\partial u}{\partial n}}
\newcommand{\nanvg}[1]{\frac{\partial v}{\partial n_{#1}}}
\newcommand{\nanuv}[1]{\frac{\partial {#1}}{\partial n_g}}
\newcommand{\Den}{\Delta_{g_0}}
\newcommand{\dvn}{dv_{g_0}}
\newcommand{\dSn}{dS_{g_0}}
\newcommand{\Area}{\text{Area}}
\newcommand{\tr}{\text{tr}}
\newcommand{\dvg}{dv_g}
\newcommand{\dvgc}[1]{dv_{g_{#1}}}
\newcommand{\dSg}{dS_g}
\newcommand{\dSgc}[1]{dS_{g_{#1}}}
\newcommand{\cc}{\mathtt{c}} 
\newcommand{\Xdb}{\overline{X}_d}
\title{{\sc Hyperbolic metrics on surfaces with boundary}
}
\author{Melanie Rupflin}
\date{\today}
\begin{document}
\begin{abstract}
We discuss an alternative approach to the uniformisation problem on surfaces with boundary by representing conformal structures on surfaces $M$ of general type by hyperbolic metrics with boundary curves of constant positive geodesic curvature.
In contrast to existing approaches to this problem, the boundary curves of our surfaces $(M,g)$ cannot collapse as the conformal structure degenerates which is important in applications in which $(M,g)$ serves as domain of a PDE with  boundary conditions.

\end{abstract}

\maketitle

\section{Introduction}
Given a surface $M$ there are many interesting questions with regards to representing a given conformal structure by a Riemannian metric. 

A classical question in this context, for $M=S^2$ known as Nirenberg's problem, asks what functions can occur as Gauss-curvatures of such metrics on closed surfaces, and over the past decades this problem has been studied by many different authors, we refer in particular to  \cite{Kazdan-Warner, Berger, Chang-Yang,Struwe-flow,Schoen-Yau} as well as the more recent work of \cite{Chen,Borer-G-Struwe} and the references therein for an overview of existing results. We also note that the corresponding problem on surfaces with boundary was investigated in \cite{Cherrier} .

Another classical problem in this context, but of a quite different flavour, is to ask how to `best' represent a given conformal structure by a Riemannian metric. For closed surfaces this problem is  addressed by the classical uniformisation theorem that allows us to represent every conformal structure by a (unique for genus at least 2) metric of constant Gauss-curvature $K_g\equiv 1,0,-1$, while for complete surfaces this problem was addressed by Mazzeo and Taylor in \cite{Mazzeo-Taylor}.
On surfaces with boundary, Osgood, Philips and Sarnak
introduced in  \cite{Sarnak}  two different notions of uniformisation, with uniform metrics of type I characterised by having constant Gauss-curvature and geodesic boundary curves, while uniform metric of type II  are flat and have boundary curves of constant geodesic curvature.
The corresponding heat flows were analysed by Brendle in \cite{Brendle-1}, who proved that these flows admit global solutions which converge to the corresponding uniform metric in the given conformal class. As observed by Brendle in \cite{Brendle-2}, for the two different types of uniform metrics introduced in \cite{Sarnak}, only one of the terms 
on the left hand side of the Gauss-Bonnet formula 
$$\int_M K dv_g+\int_{\bM} k_g dS_g=2\pi\chi(M)$$
gives a contribution and so the two types of uniform metrics can 
be seen the opposite ends of a whole family of metrics for which all terms in the above formula have the same sign. Brendle \cite{Brendle-2} proved also in this more general setting that solutions of the corresponding heat flows exist for all times and converge, now to metrics with $K_g\equiv \bar K$ and $k_g\equiv \bar k$, where the signs of $\bar K$ and $\bar k$ both agree with the sign of $\chi(M)$. We note that the same restriction on the signs of the curvatures is also present in the work of Cherrier \cite{Cherrier}. 

Here we propose an alternative way of representing conformal structures on surfaces of general type with boundary which is motivated by applications to geometric flows, such as Teichm\"uller harmonic map flow \cite{RT, R-cyl} or Ricci-harmonic map flow \cite{Reto}, in which the surface $(M,g)$ plays the role of a time dependent domain on which a further PDE is solved. For this purpose the described ways of uniformisation on surfaces with boundary suffer the serious drawback that a degeneration of the conformal structure, which can occur even for curves of metrics with finite length, can lead to a degeneration of the metric near the boundary curves, with boundary curves turning into punctures in the limit, so that the very set on which the boundary condition is imposed can be lost. 

To resolve this problem, we propose to represent conformal classes on surfaces of general type instead by hyperbolic metrics for which each boundary curve is a curve of \textit{positive} constant geodesic curvature, chosen so that each of the boundary curves gives a fixed positive contribution to the Gauss-Bonnet formula. As we shall see below, this alternative approach has the advantage that the resulting metrics will remain well controlled near the boundary even if the conformal class degenerates in a way that would cause the  boundary curves of the corresponding uniform metrics of type I or II to collapse. The existence of a unique representative of each conformal class with these desired properties is ensured by our first main result.
\begin{thm}\label{thm:1}
Let $M$ be a compact oriented surface of genus $\gamma$ with boundary curves $\Gamma_1,..,\Gamma_k$ and negative Euler characteristic 
and let $d>0$ be any fixed number. Then for any conformal structure $\mathtt{c}$ on $M$ there exists a unique hyperbolic metric $g$ compatible with $\mathtt{c}$ for which 
\beq \label{def:Md} 
\kg\vert_{\Gamma_i}\cdot L_g(\Gamma_i)\equiv d \text{ on } \Gamma_i \text{ for every } i=1\ldots k.
\eeq
Denoting by $\M^d$ the set of all hyperbolic metrics on $M$ satisfying \eqref{def:Md}, we furthermore have that for every $g\in \M^d$ and every $i\in\{1,\ldots,k\}$ there exists a unique simple closed geodesic $\gamma_i$ in the interior of $M$ which is homotopic to $\Gamma_i$, this geodesic is surrounded by a collar neighbourhood $\Col(\gamma_i)$ that is described in Lemma \ref{lemma:col-Md}
 and its length is related to the length of the corresponding boundary curve by \beq 
\label{claim:rel-length}
L_g(\Gamma_i)^2-L_g(\gamma_i)^2=d^2.\eeq
\end{thm}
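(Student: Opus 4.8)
The plan is to prove the three assertions in turn, dealing first with the purely geometric statements (valid for any $g\in\M^d$, regardless of the conformal class) and then with existence and uniqueness in a fixed class $\mathtt c$ by a deformation argument in the parameter $d$.

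Since $g\in\M^d$ forces $\kg$ to equal the constant $d/L_g(\Gamma_i)$ along $\Gamma_i$, the starting point is to observe that this constant must be strictly less than $1$: if $\kg|_{\Gamma_i}\ge 1$ then the deck transformation associated with $[\Gamma_i]$ would be parabolic or elliptic, which is impossible for the (torsion free, without cusps) fundamental group of a compact surface of negative Euler characteristic. Hence each $\Gamma_i$ is a hypercycle, equidistant from the unique closed geodesic $\gamma_i$ in its free homotopy class, and $\gamma_i$ lies on the concave side of $\Gamma_i$, i.e.\ in the interior of $M$; uniqueness of $\gamma_i$ is the usual uniqueness of geodesic representatives, and the region between $\gamma_i$ and $\Gamma_i$ is the collar $\Col(\gamma_i)$ described in Lemma \ref{lemma:col-Md}. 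Writing $\ell_i=L_g(\gamma_i)$ and using Fermi coordinates in which $\Col(\gamma_i)=\{(\rho,t):0\le\rho\le\rho_i,\ t\in\R/\ell_i\Z\}$ carries the metric $d\rho^2+\cosh^2\!\rho\,dt^2$, one has $L_g(\Gamma_i)=\ell_i\cosh\rho_i$ and $\kg|_{\Gamma_i}=\tanh\rho_i$, so Gauss--Bonnet on the annulus $\Col(\gamma_i)$ gives $\Area(\Col(\gamma_i))=\kg|_{\Gamma_i}\,L_g(\Gamma_i)=d$, that is $\ell_i\sinh\rho_i=d$. Combining the two identities yields $L_g(\Gamma_i)^2-L_g(\gamma_i)^2=\ell_i^2\sinh^2\!\rho_i=d^2$, which is \eqref{claim:rel-length}; in particular $L_g(\Gamma_i)^2=\ell_i^2+d^2>d^2$, so $\kg|_{\Gamma_i}<1$ and $L_g(\Gamma_i)>d$ are automatic --- this is the non-collapsing of boundary curves built into the definition of $\M^d$.

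For existence and uniqueness in a fixed class $\mathtt c$, let $g_0$ be the classical (and unique) hyperbolic metric with geodesic boundary in $\mathtt c$ and look for $g=e^{2u}g_0$. The condition $K_g\equiv-1$ becomes the interior equation $\Den u=e^{2u}-1$ on $M$, while $\kg|_{\Gamma_i}L_g(\Gamma_i)\equiv d$ (using $\kgn|_{\Gamma_i}=0$) becomes the nonlocal Robin-type boundary condition $\nanu=d\,e^{u}\big(\int_{\Gamma_i}e^{u}\,\dSn\big)^{-1}$ on each $\Gamma_i$. Equivalently, by the first paragraph, the theorem asserts that the map sending a hyperbolic metric with geodesic boundary to the metric obtained by gluing to each boundary geodesic the hyperbolic collar dictated by \eqref{claim:rel-length} is a bijection onto the conformal classes of $M$. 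I would establish this by a continuity method in $d\in[0,\infty)$: at $d=0$ the unique solution is $u\equiv 0$; openness of the set of $d$ admitting a solution follows from the implicit function theorem, the linearised operator being a Fredholm operator of index zero whose invertibility can be read off from the algebraic structure of the nonlocal term (the boundary conditions are arranged precisely so that $\int_{\bM}\partial_n(\,\cdot\,)\,\dSn$ has a computable form), and if invertibility were to fail at isolated values one could instead run a Leray--Schauder degree argument anchored at $d=0$; uniqueness can then be propagated along the deformation, or obtained directly by noting that the two boundary conditions force $\int_{\bM}\nanu\,\dSn=0$, hence $\int_M(e^{2u}-1)\,\dvn=0$, after which a maximum principle (interior extrema of $u$ being excluded by the interior equation, together with the Hopf lemma on $\bM$) pins down $u\equiv 0$ within the class.

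The crucial point, and the place where one is forced to use the specific geometry rather than quote an existing result, is the a priori estimate needed for the closedness step of the continuity method: here $\kg>0$ whereas $\chi(M)<0$, so the boundary curvature has the \emph{opposite} sign to the one for which the monotonicity and sub/super\-solution arguments of Brendle apply. What replaces them is exactly the non-collapsing observed above. Along the deformation the conformal class $\mathtt c$ is held fixed, so no simple closed curve of $M$ --- in particular no $\gamma_i$ --- can be pinched, since its extremal length in $\mathtt c$ is a fixed positive number; at the same time \eqref{claim:rel-length} keeps $L_g(\Gamma_i)$ bounded away from $0$ and $\kg|_{\Gamma_i}$ bounded away from $1$, and the Gauss--Bonnet identity $\Area(M,g)=2\pi|\chi(M)|+kd$ controls the total area. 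Together with standard interior and boundary elliptic estimates these bounds confine $u$ to a precompact set in $C^{2,\alpha}(M)$, so solutions cannot escape and the continuity method closes up; I expect the technical heart of the argument to lie precisely in turning these geometric bounds into the required uniform control of $u$ up to $\bM$.
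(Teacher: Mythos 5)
Your first paragraph (the purely geometric part) is correct and is essentially the content of Lemma \ref{lemma:collar}, Corollary \ref{cor:lower-bound-ell} and Lemma \ref{lemma:col-Md}; working it out in Fermi coordinates $d\rho^2+\cosh^2\!\rho\,dt^2$ rather than in collar coordinates $\rho_\ell(s)^2(ds^2+d\th^2)$ as in the paper is merely a change of variable, and the identity $\Area(\Col^+)=k_g L_g(\Gamma_i)=\ell_i\sinh\rho_i=d$ reproduces \eqref{claim:rel-length} exactly as in Lemma \ref{lemma:col-Md}. The observation that $k_g|_{\Gamma_i}<1$ for free (no parabolic or elliptic deck transformations) is a nice addition.

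For the existence and uniqueness claim, however, you take a genuinely different route, and it leaves two real gaps. The paper does not attack the nonlocal Robin condition $\nanu=d\,e^u\big(\int_{\Gamma_i}e^u\,\dSn\big)^{-1}$ directly. Instead it first proves (Lemma \ref{lemma:existence}, Proposition \ref{prop:existence-PDE}) that for every fixed $c=(c_1,\dots,c_k)\in[0,1)^k$ the \emph{local} problem \eqref{eq:PDE} with $\nanu=c_ie^u$ has a unique smooth solution $u_c$, obtained by minimising the variational integral $I_c$ in \eqref{eq:Int}; the wrong sign of the boundary term is handled not by a maximum principle but by the sharp trace inequality of Lemma \ref{lemma:trace} with leading constant $1$, which gives coercivity for $\bar c=\max c_i<1$. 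Uniqueness and $C^1$ dependence on $c$ come from the quantitative a priori estimates of Lemmas \ref{lemma:apriori} and \ref{lemma:ubc}, again not from a maximum principle. The passage from prescribed $c$ to the constraint \eqref{def:Md} is then a \emph{finite-dimensional} problem: Lemma \ref{lemma:diffeo} shows $f:c\mapsto(c_iL_{g_c}(\Gamma_i))_i$ is a $C^1$ proper map from $(0,1)^k$ to $(\R^+)^k$ with non-vanishing Jacobian, hence a diffeomorphism by Hadamard's global inverse function theorem; properness is exactly where Corollary \ref{cor:lower-bound-ell} (boundary length blows up as $c_i\upto 1$) enters. The upshot is that all the delicate estimates are for a standard subcritical Neumann problem with a local boundary condition, and the nonlocality of \eqref{def:Md} is reduced to an algebraic statement about a map $\R^k\to\R^k$.

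The gaps in your version: (i) for the continuity method you would need closedness, i.e.\ uniform $C^{2,\alpha}$ bounds for $u$ along the deformation with the nonlocal boundary condition, and you explicitly defer this (``the technical heart of the argument''); the extremal-length and Gauss--Bonnet area observations control geometric quantities of $g$ but do not by themselves produce a bound on $\|u\|_{C^0(\ov M)}$ — note that the corresponding estimate in the paper (Lemma \ref{lemma:ubc}) is one of the more involved steps and relies crucially on the trace inequalities \eqref{est:trace2}--\eqref{est:trace3}, which in turn need the precise collar structure of Lemma \ref{lemma:collar}. (ii) Your direct uniqueness argument does not close. For the difference $w=u_1-u_2$ of two solutions in $\M^d$ one does get $\int_{\bM}\partial_n w\,\dSn=0$ and hence $\int_M(e^{2u_1}-e^{2u_2})\,\dvn=0$, but the interior equation only excludes a \emph{strict} interior extremum; to run Hopf's lemma at a boundary extremum one must compare $\partial_n w=c_1^{(i)}e^{u_1}-c_2^{(i)}e^{u_2}$, where the Robin coefficients $c_j^{(i)}=d/L_{g_j}(\Gamma_i)$ are themselves unknown and generally different, and the sign of this expression at the extremal point is not determined by $w$ alone. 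This is precisely the difficulty that the sign of $k_g$ creates and that rules out the sub/super-solution machinery; the paper's energy-estimate route in Lemma \ref{lemma:ubc} is what replaces it.
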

For hyperbolic surfaces with boundary curves of constant geodesic curvature $k_g$, the relation \eqref{claim:rel-length} between the lengths of a boundary curve and the corresponding geodesic is equivalent to \eqref{def:Md} and we note that \eqref{def:Md} implies that the area of the region enclosed by $\Gamma_i$ and $\gamma_i$ is always equal to $d$. 
We remark that the quantity \eqref{claim:rel-length} appears also naturally if one studies horizontal curves of metrics on closed surfaces, that is curves of hyperbolic metrics which move $L^2$-orthogonally to the action of diffeomorphisms, and that for such curves  the analogue of \eqref{claim:rel-length} is valid  at the infitesimal level, see Remark \ref{rmk:horizontal} for details.

While the uniform metrics of type I could be viewed as the extremal case $d=0$ of $\M^d$, the resulting compactifications of the moduli space are very different. For our class of metrics, the analogue of 
the Deligne-Mumford compactness theorem takes the following form.

\begin{thm}\label{thm:DM}
Let $M$ be a compact oriented surface of genus $\gamma$ with boundary curves $\Gamma_1,..,\Gamma_k$ and negative Euler characteristic, let $\M^d$, $d>0$, be the set of all hyperbolic metrics on $M$ for which \eqref{def:Md} is satisfied and suppose that $g^{(j)}$ is a sequence in $\M^d$ for which the lengths of the boundary curves are bounded above uniformly.
\newline
Then, after passing to a subsequence,  
$(M,g^{(j)})$ converges to a complete hyperbolic surface $(\Sigma,g_\infty)$
with the same number of boundary curves, all satisfying \eqref{def:Md}, where $\Si$ is obtained
from $M$ by removing a collection 
$\mathscr{E}=\{\sigma^{j}, j=1,\ldots,\kappa\}$ of 
$\kappa\in\{0,\ldots,3(\gamma-1)+2k\}$ pairwise disjoint homotopically nontrivial simple closed curves in the interior of $M$ and the convergence is to be understood as follows:
\newline
For each $j$ there exists a collection $\mathscr{E}^{(j)}=\{\si_{i}^{(j)}, i=1,\ldots,\kappa\}$ of
pairwise disjoint simple closed geodesics in $(M,g^{(j)})$
 of length
$L_{g^{(j)}}(\si^{(j)}_{i}) \rightarrow 0\text{ as }j \rightarrow \infty$
and a diffeomorphism $f_j:\Sigma \rightarrow M\setminus \cup_{i=1}^\kappa\si^{(j)}_{i} $ 
such that 
$$f_j^*g^{(j)} \rightarrow g^\infty \text{ smoothly locally on }\Sigma.$$
\end{thm}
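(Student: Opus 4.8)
The plan is to reduce the statement to the classical Deligne--Mumford compactness theorem for compact hyperbolic surfaces with geodesic boundary (itself a consequence of the closed case, obtained by doubling along the boundary), using the companion geodesics $\gamma_i$ from Theorem~\ref{thm:1} to cut off the collar regions adjacent to $\bM$, on which the metrics are not hyperbolic with geodesic boundary but whose geometry is rigidly controlled by Lemma~\ref{lemma:col-Md}. \textbf{Step 1: a priori control of lengths.} By Theorem~\ref{thm:1}, each $g^{(j)}\in\M^d$ carries interior simple closed geodesics $\gamma_i^{(j)}$ homotopic to $\Gamma_i$ with $L_{g^{(j)}}(\Gamma_i)^2-L_{g^{(j)}}(\gamma_i^{(j)})^2=d^2$. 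Combined with the hypothesis $L_{g^{(j)}}(\Gamma_i)\le C$ this yields, for all $i$ and $j$,
\[
d\le L_{g^{(j)}}(\Gamma_i)\le C,\qquad 0\le L_{g^{(j)}}(\gamma_i^{(j)})\le\sqrt{C^2-d^2};
\]
in particular the boundary curves can neither collapse nor escape. Passing to a subsequence I may assume $L_{g^{(j)}}(\gamma_i^{(j)})\to\ell_i\in[0,\sqrt{C^2-d^2}]$ for each $i$, and set $I_0:=\{i:\ell_i=0\}$.

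\textbf{Step 2: cutting off the boundary collars.} For each $i$, the curve $\gamma_i^{(j)}$ together with $\Gamma_i$ bounds the half-collar $A_i^{(j)}\subset\Col(\gamma_i^{(j)})$ of Lemma~\ref{lemma:col-Md}, which is isometric to an explicit model depending only on $L_{g^{(j)}}(\gamma_i^{(j)})$ and has area $d$. Removing the interiors of these annuli leaves $M_j':=M\setminus\bigcup_i\interior A_i^{(j)}$, a compact hyperbolic surface of genus $\gamma$ with $k$ \emph{geodesic} boundary curves $\gamma_1^{(j)},\dots,\gamma_k^{(j)}$. Since these subsurfaces are all isotopic in $M$, I would fix identifications with a single bordered surface $M'$ and view $g^{(j)}|_{M_j'}$ as a sequence of hyperbolic metrics with geodesic boundary on $M'$ whose boundary lengths are bounded above by Step~1. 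On the collar side the explicit model shows: for $i\notin I_0$, $A_i^{(j)}$ converges smoothly to the model half-collar $A_i^\infty$ around a geodesic of length $\ell_i$ bounded by a curve $\Gamma_i$ of length $\sqrt{\ell_i^2+d^2}$; for $i\in I_0$, the open annulus $A_i^{(j)}\setminus\gamma_i^{(j)}$ converges smoothly locally to a complete hyperbolic annulus $A_i^\infty$ of area $d$ with one cusp and one boundary curve $\Gamma_i$ of length $d$ and geodesic curvature $1$. In all cases the identity $\kg|_{\Gamma_i}\,L_{g^{(j)}}(\Gamma_i)\equiv d$ passes to the limit.

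\textbf{Step 3: Deligne--Mumford on the bulk, and assembly.} Applying the Deligne--Mumford compactness theorem to the hyperbolic surfaces with geodesic boundary $(M',g^{(j)}|_{M_j'})$, one obtains, after a further subsequence, a complete, possibly disconnected, hyperbolic surface $(\Sigma',g'_\infty)$ with geodesic boundary, pairwise disjoint interior simple closed geodesics $\tau_1^{(j)},\dots,\tau_m^{(j)}$ of $M'$ --- with $m$ at most the number $3\gamma-3+k$ of curves in a pants decomposition of $M'$ --- whose lengths tend to $0$, and diffeomorphisms $h_j:\Sigma'\to M'\setminus\bigcup_\ell\tau_\ell^{(j)}$ with $h_j^*g^{(j)}\to g'_\infty$ smoothly locally on $\Sigma'$; moreover the boundary curves of $\Sigma'$ that persist are precisely the $\gamma_i$ with $i\notin I_0$, while for $i\in I_0$ the boundary geodesic $\gamma_i^{(j)}$ pinches as well. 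I would then build $\Sigma$ by gluing $\Sigma'$ to $A_i^\infty$ along the surviving geodesic $\gamma_i$ for each $i\notin I_0$ and adjoining $A_i^\infty$ as a separate component for each $i\in I_0$; the pieces $g'_\infty$ and $g_\infty|_{A_i^\infty}$ match along each $\gamma_i$ since both are standard hyperbolic half-collars determined by $\ell_i$, so they glue to a complete hyperbolic metric $g_\infty$ on $\Sigma$ which, by Step~2, satisfies \eqref{def:Md} on all $k$ boundary curves. Topologically $\Sigma=M\setminus\bigcup_{\sigma\in\mathscr{E}}\sigma$ with $\mathscr{E}=\{\tau_1,\dots,\tau_m\}\cup\{\gamma_i:i\in I_0\}$, so that $\kappa=m+|I_0|\le(3\gamma-3+k)+k=3(\gamma-1)+2k$, and the required maps $f_j:\Sigma\to M\setminus\bigcup_{\sigma\in\mathscr{E}}\sigma^{(j)}$ are obtained by combining $h_j$ on $\Sigma'$ with the model identifications on each $A_i^{(j)}$, suitably patched near the persisting $\gamma_i$'s; the convergence $f_j^*g^{(j)}\to g_\infty$ smoothly locally on $\Sigma$ then follows from the convergences in Steps~2 and~3.

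\textbf{The main obstacle.} Everything beyond the classical Deligne--Mumford input lies in Step~2 and the bookkeeping of Step~3: first, one must genuinely use Lemma~\ref{lemma:col-Md} to reduce the geometry of $(M,g^{(j)})$ near each boundary curve to a one-parameter family governed by the single number $L_{g^{(j)}}(\gamma_i^{(j)})$, which is precisely what prevents degeneration near $\bM$ and makes that region automatically precompact; second, one must carefully patch the Deligne--Mumford diffeomorphisms $h_j$ of the bulk to the model identifications of the collars into a single $f_j$ with the stated smooth convergence, and handle the degenerate case $i\in I_0$, in which the collar detaches as a separate cusped component while the constraint \eqref{def:Md} nevertheless survives in the limit on the corresponding boundary curve.
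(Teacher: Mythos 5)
Your proposal matches the paper's argument essentially step for step: cut $(M,g^{(j)})$ along the interior geodesics $\gamma_i^{(j)}$ to obtain a sequence of hyperbolic surfaces with geodesic boundary of uniformly bounded boundary length, apply the classical Deligne--Mumford theorem to that bulk, describe the half-collars $\Col^+(\ell_i^{(j)})$ explicitly via Lemma~\ref{lemma:col-Md} (including the degeneration to a cusped annulus when $\ell_i\to 0$), and then glue the limit pieces and patch the diffeomorphisms, exactly as the paper does using Fermi coordinates near the persisting $\gamma_i$'s and the explicit collar maps for the pinching ones. The counting $\kappa=m+|I_0|\le(3\gamma-3+k)+k=3(\gamma-1)+2k$ also agrees with the paper's bookkeeping.
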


The above results assure that the metrics are well controlled near the boundary even if the conformal structure degenerates, compare also Remark \ref{rmk:L-to-infty}, and that in particular no boundary curve can be `lost'. 
Both of these properties are crucial in applications where $(M,g)$ plays the role of the domain of a PDE with prescribed boundary conditions, even more so if we are dealing with Plateau boundary conditions as in the study of Teichm\"uller harmonic map flow, introduced in the joint work with Topping \cite{RT} for closed surfaces, see also \cite{Ding-Li-Liu} for an equivalent flow from tori, and in \cite{R-cyl} for cylinders, where one expects less regularity at the boundary than for comparable Dirichlet boundary conditions.   
In particular, if one hopes to prove global existence results, as obtained in \cite{RT-neg} and \cite{RT-global} for Teichm\"uller harmonic map flow, or in \cite{Reto-Melanie} for Ricci-harmonic map flow for closed surfaces, for surfaces with boundary, it is important that the most delicate region for the PDE, i.e.~the boundary region, and the most delicate region for the evolution of the domain metric, which for hyperbolic metrics are sets of small  injectivity radius, do not overlap but are instead far apart as is the case  for our class of metrics $\M^d$.

This paper is organised as follows. In Section \ref{sect:2} we
consider the problem of finding hyperbolic metrics in a given conformal class with prescribed positive geodesic curvatures $k_g\vert _{\Gamma_i}=c_i$ and analyse the properties of such metrics. The main difficulty here lies in the fact that for $c_i>0$ the boundary condition has the wrong sign to apply known existence results as found e.g.~in \cite{Cherrier} and the corresponding variational problems contain negative boundary terms that have to be analysed carefully.
 Based on the results and estimates proven in Section \ref{sect:2} we will then give the proofs of the main results in Section \ref{sect:3}.

\section{Hyperbolic surfaces with boundary curves of prescribed positive curvature}\label{sect:2}
In this section we prove the existence and uniqueness of hyperbolic metrics for which the boundary curves have prescribed positive constant geodesic curvature and establish several key properties of these metrics, which will be the basis of the proofs of our main results given in Section \ref{sect:3}. We show in particular.

\begin{lemma}
\label{lemma:existence}
Let $M$ be an oriented  surface with boundary $\partial M=\bigcup_{i=1}^k \Gamma_i$ with $\chi(M)=2(1-\gamma)-k<0$ and let 
$\cc$ be any conformal structure on $M$.
Then for any 
$c=(c_1,..,c_k)\in [0,1)^k$
there exists a unique metric 
$g_c$ on $M$ compatible with $\cc$ so that
\beq \label{eq:c}
\left \{ \begin{array}{rll}
K_{g_c}&=-1 &\text{ in } M\\
\kgc &=c_i  & \text{ on } \Gamma_i, \quad i=1,..,k.
\end{array}
\right. 
\eeq
\end{lemma}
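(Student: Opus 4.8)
The plan is to reduce the problem to a semilinear elliptic PDE with a nonlinear (Robin-type) boundary condition via the conformal ansatz, and to solve it by the direct method in the calculus of variations, paying special attention to the sign of the boundary term. Fix any smooth background metric $g_0$ compatible with $\cc$, and write the sought metric as $g = e^{2u}g_0$. Then the curvature equations \eqref{eq:c} translate into
\beqa
-\Den u + K_{g_0} &= -e^{2u} \quad \text{in } M,\\
\nanuv{g_0} + k_{g_0} &= c_i\, e^{u} \quad \text{on } \Gamma_i,
\eeqa
where $\nanuv{g_0}$ denotes the outward normal derivative with respect to $g_0$. The associated energy functional on $H^1(M,g_0)$ is
\beqs
E(u) = \half\int_M \babs{\na u}^2\,\dvn + \int_M K_{g_0}\, u\,\dvn + \half\int_M e^{2u}\,\dvn + \int_{\bM} k_{g_0}\, u\,\dSn - \sum_{i=1}^k c_i\int_{\Gamma_i} e^{u}\,\dSn,
\eeqs
whose critical points are exactly the weak solutions of the system. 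The first step is therefore to record this reduction and to note, using Gauss--Bonnet, that $\int_M K_{g_0}\,\dvn + \int_{\bM}k_{g_0}\,\dSn = 2\pi\chi(M) < 0$, which is what will ultimately make the functional coercive.

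The main obstacle, as the authors themselves flag, is the negative boundary term $-\sum_i c_i\int_{\Gamma_i} e^u\,\dSn$, which a priori could send $E$ to $-\infty$. The key estimate to establish is that this term is controlled by the interior term $\half\int_M e^{2u}\,\dvn$ together with a small multiple of the Dirichlet energy: concretely, one wants a trace-type bound of the form $\int_{\bM} e^{u}\,\dSn \leq \eps\int_M\babs{\na u}^2\,\dvn + C_\eps\big(\int_M e^{2u}\,\dvn + 1\big)$, or more robustly an interpolation $\int_{\Gamma_i}e^u \leq \eta \int_M e^{2u} + C(\eta)\int_M e^{u} + \dots$ combined with the trace theorem and Young's inequality. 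Since $c_i < 1$ strictly, the borderline case is excluded and one gains a definite amount of room; this is precisely where the hypothesis $c\in[0,1)^k$ (rather than $[0,1]^k$) is used. Once coercivity and weak lower semicontinuity are in hand --- the latter being routine since $u\mapsto\int e^{2u}$ and $u\mapsto\int_{\bM}e^u$ are weakly continuous by Moser--Trudinger/compact trace embeddings in two dimensions, while the Dirichlet term is weakly lsc --- a minimiser $u\in H^1$ exists. Elliptic regularity for the equation with its nonlinear Neumann condition then upgrades $u$ to be smooth up to the boundary, and standard bootstrapping gives $g_c = e^{2u}g_0$ with the asserted curvatures.

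Finally, uniqueness: suppose $u_1, u_2$ both solve the system, set $w = u_1 - u_2$, subtract the two equations, multiply by $w$ and integrate by parts. One obtains
\beqs
\int_M \babs{\na w}^2\,\dvn + \int_M (e^{2u_1}-e^{2u_2})w\,\dvn = \sum_i c_i\int_{\Gamma_i}(e^{u_1}-e^{u_2})w\,\dSn.
\eeqs
The left-hand side is coercive in $w$ because $t\mapsto e^{2t}$ is increasing (so the second integrand is $\geq 0$ pointwise), while on the right one again uses $c_i<1$ together with the monotonicity $(e^{u_1}-e^{u_2})w \leq (e^{2u_1}-e^{2u_2})w \cdot(\text{bounded factor})$ or a direct trace estimate to absorb the boundary term; the strict inequality $c_i<1$ forces $w\equiv 0$. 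I expect the delicate point throughout to be quantifying exactly how much Dirichlet energy the negative boundary term can consume, i.e.\ pinning down the sharp constant in the trace interpolation so that strict subcriticality $c_i<1$ translates into a strictly positive remaining coercive term; everything else is fairly standard two-dimensional variational machinery.
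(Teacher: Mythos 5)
Your overall plan — conformal ansatz, direct method, Moser--Trudinger compactness for weak lower semicontinuity, elliptic regularity, then an energy comparison for uniqueness — matches the paper's in spirit, but both of the delicate steps you flag as "the place where $c_i<1$ is used" contain genuine gaps.

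\emph{Coercivity.} You write down the trace interpolation
$\int_{\bM}e^u\,\dSn \leq \eps\int_M|\na u|^2\,\dvn + C_\eps(\int_M e^{2u}\,\dvn+1)$ as the key estimate, but this is not the right shape: after Young's inequality $e^u|du|\leq\tfrac a2|du|^2+\tfrac1{2a}e^{2u}$ the two coefficients are reciprocal, so shrinking the one on $|du|^2$ inflates the one on $e^{2u}$, and once $\bar c\cdot C_\eps>1$ the $e^{2u}$ term in $I_c$ is overwhelmed. What is actually needed, and what the paper proves (Lemma~\ref{lemma:trace}), is a $W^{1,1}$ trace inequality whose \emph{leading constant is exactly $1$}: $\int_{\bM}|w|\,\dSn \leq \int_M|dw|\,\dvn + C_1\int_M|w|\,\dvn$. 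Applied to $w=e^u$ with the balanced Young split $a=1$, this gives coefficient $(1-\bar c)$ on both $|du|^2$ and $e^{2u}$ — positive precisely because $\bar c<1$, and nothing smaller would cover all $c\in[0,1)^k$. The paper obtains this sharp constant by choosing $g_0$ to be the hyperbolic metric with geodesic boundary (which simplifies \eqref{eq:PDE} and removes the $K_{g_0},k_{g_0}$ terms you keep around) and deriving the estimate from the Keen--Randol collar description of the boundary neighbourhoods. With a generic smooth background the constant would not be $1$, and the argument would fail for $\bar c$ close to $1$.

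\emph{Uniqueness.} Your subtract-and-test identity
$\int_M|\na w|^2 + \int_M(e^{2u_1}-e^{2u_2})w = \sum_i c_i\int_{\Gamma_i}(e^{u_1}-e^{u_2})w$
has \emph{both} sides nonnegative (since $t\mapsto e^t$ and $t\mapsto e^{2t}$ are increasing and $c_i\geq0$), so it does not immediately force $w\equiv0$; the assertion that "the strict inequality $c_i<1$ forces $w\equiv0$" is unsubstantiated. Comparing the boundary integral to the bulk terms again requires a quantitative trace estimate, and the naive one from Lemma~\ref{lemma:trace} is not quite strong enough because the boundary integrand $(e^{u_1}-e^{u_2})w$ is not pointwise dominated by the bulk integrand $(e^{2u_1}-e^{2u_2})w$ (e.g.\ where both $u_i$ are very negative, $e^{u_1}+e^{u_2}\ll 2$ and the inequality reverses). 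The paper instead proves a sharper trace inequality \emph{on the hyperbolic surface $(M,g_c)$ itself}, using the explicit collar structure near the boundary (Lemma~\ref{lemma:trace2}, with a gain of $\eps$ on both sides), and deduces uniqueness as a by-product of a quantitative a priori estimate on solutions with nearby boundary curvatures (Lemmas~\ref{lemma:apriori} and~\ref{lemma:ubc}). That a priori estimate — testing the nonlinear equation with $e^w-1$ rather than $w$, and then bootstrapping — is the actual engine; without it, or some replacement, the uniqueness argument does not close.
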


This result is of course true also for $c_i<0$, and in that case is indeed easier to prove as the boundary term in the corresponding
variational integral has the right sign. We are however not interested in the properties of representatives with $c_i\leq 0$ as their boundary curves can collapse if the conformal structure degenerates, the very feature of the existing approaches of uniformisation that we want to avoid with our construction.

We recall that under a conformal change $g=e^{2u}g_0$ 
 the Gauss-curvature transforms by
\beq\label{eq:trafo-Gauss}
K_g=e^{-2u}(K_{g_0}-\Delta_{g_0}u)
\eeq
while, denoting by $n_{g_0}$ the outer unit normal of $(M,g_0)$, the geodesic curvature $k_g$ is characterised by
\beq \label{eq:trafo-geod}
\frac{\partial u}{\partial n_{g_0}}+\kgn=\kg\cdot e^{u}. 
\eeq
In the following we let $g_0$ be the unique metric so that $(M,g
_0)$ is hyperbolic with geodesic boundary curves (which can 
e.g.~be obtained by doubling the surface and applying the classical uniformisation theorem), and write for short $n=n_{g_0}$. 
Thus $g=e^{2u}g_0$ satisfies \eqref{eq:c} if and only if 
\beq \label{eq:PDE}
\left \{ \begin{array}{rll}
-\Den u&=1-e^{2u} &\text{ in } M\\
\nanu &=c_i e^u & \text{ on } \Gamma_i, \quad i=1,..,k.
\end{array}
\right. 
\eeq
Lemma \ref{lemma:existence} is hence an immediate consequence of the following more refined result on solutions of the above PDE that we will prove in the present section.

\begin{prop} \label{prop:existence-PDE}
Let $(M,g_0)$ be an oriented hyperbolic surface with geodesic boundary curves $\Gamma_1,\ldots, \Gamma_k$. 
Then for any $c=(c_1,\ldots,c_k)\in [0,1)^k$ the equation \eqref{eq:PDE} has a unique weak solution $u_c\in H^1(M,g_0)$ and this solution is
smooth up to the boundary of $M$. In addition, the map 
$$[0,1)^k\ni c \mapsto u_c\in H^1(M,g_0)$$ is of class $C^1$.
\end{prop}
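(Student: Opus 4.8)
The plan is to set up the problem variationally and then use the method of sub- and supersolutions together with monotonicity/convexity to extract existence, uniqueness, regularity and smooth dependence on $c$. Define on $H^1(M,g_0)$ the functional
\[
E_c(u)=\tfrac12\int_M\babs{\grad u}^2\,\dvn+\tfrac12\int_M e^{2u}\,\dvn-\int_M u\,\dvn-\sum_{i=1}^k c_i\int_{\Gamma_i}e^u\,\dSn,
\]
whose critical points are exactly the weak solutions of \eqref{eq:PDE}. The first three terms are coercive and convex (indeed strictly convex modulo the usual issues), but the boundary terms $-c_i\int_{\Gamma_i}e^u$ have the \emph{wrong sign}, so $E_c$ is not obviously bounded below, and this is precisely the main obstacle. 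The key point to exploit is that $c_i<1$: the trace/boundary term can be controlled by the bulk energy via a trace-type inequality combined with the geometry of $(M,g_0)$ (a geodesic collar around $\Gamma_i$ gives a precise comparison). The plan is to prove a bound of the form $\sum_i c_i\int_{\Gamma_i}e^u\le \lambda\int_M e^{2u}+\tfrac12\int_M\babs{\grad u}^2+C$ with $\lambda<\tfrac12$ whenever $\max_i c_i<1$, which makes $E_c$ coercive and weakly lower semicontinuous on $H^1$; a minimiser then exists.

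For \emph{uniqueness} I would argue that $E_c$ is in fact strictly convex along the line $t\mapsto u+t(v-u)$ once one uses the structure of the equation, or — more robustly given the bad sign — use a comparison/maximum-principle argument: if $u_1,u_2$ both solve \eqref{eq:PDE}, set $w=u_1-u_2$, test the difference of the equations with $w_+=\max(w,0)$, and observe that $-\Den$ contributes $\int\babs{\grad w_+}^2\ge0$, the term $-(e^{2u_1}-e^{2u_2})$ has the favorable sign on $\{w>0\}$, and the boundary term $\int_{\Gamma_i}(c_ie^{u_1}-c_ie^{u_2})w_+\ge0$ also has the right sign because $t\mapsto c_ie^t$ is increasing; hence $w_+\equiv0$, and symmetrically $w\equiv0$. \emph{Regularity} up to the boundary is then standard elliptic bootstrap for the semilinear oblique/Neumann problem \eqref{eq:PDE}: $u\in H^1$ with $e^u\in L^p$ for all $p$ by Moser–Trudinger, so $u\in W^{2,p}$ hence $C^{1,\alpha}$, then iterate to get $u\in C^\infty(\overline M)$; standard Schauder estimates for the Neumann-type boundary condition apply since the boundary data $c_ie^u$ is smooth once $u$ is.

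Finally, for the $C^1$-dependence $c\mapsto u_c$ I would apply the implicit function theorem. Regard $F:[0,1)^k\times (C^{2,\alpha}(\overline M))\to C^{0,\alpha}(\overline M)\times\prod_i C^{1,\alpha}(\Gamma_i)$, $F(c,u)=(-\Den u-1+e^{2u},\ (\nanu-c_ie^u)_i)$; this is smooth in $(c,u)$. The partial differential $D_uF(c,u_c)$ is the linearisation $v\mapsto(-\Den v+2e^{2u_c}v,\ (\nanvg{}-c_ie^{u_c}v)_i)$, which is a linear elliptic problem with a zeroth-order bulk coefficient $2e^{2u_c}>0$ and a Neumann-type boundary condition with coefficient $-c_ie^{u_c}$; by the same wrong-sign-but-$c_i<1$ estimate as above (now linear, hence cleaner) this operator is an isomorphism between the stated Hölder spaces — invertibility again reduces to the coercivity of the associated bilinear form $\int\babs{\grad v}^2+2\int e^{2u_c}v^2-\sum c_i\int_{\Gamma_i}e^{u_c}v^2$, which is positive definite exactly because $c_i<1$. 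The implicit function theorem then yields a $C^1$ (indeed $C^\infty$) map $c\mapsto u_c$ into $C^{2,\alpha}(\overline M)\hookrightarrow H^1(M,g_0)$, completing the proof. The delicate step throughout — worth isolating as a preliminary lemma — is the quantitative boundary estimate that converts $c_i<1$ into coercivity; I expect it to rely on an explicit collar parametrisation of a neighbourhood of each $\Gamma_i$ in $(M,g_0)$.
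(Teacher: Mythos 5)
Your overall strategy for \emph{existence} — minimize a functional whose boundary term has the wrong sign, and absorb that term by a trace inequality exploiting the collar geometry of $(M,g_0)$ — is exactly what the paper does (Lemma \ref{lemma:trace}), so that part is fine modulo the trace lemma you rightly flag as the crux. The regularity step via Cherrier-type bootstrap is also as in the paper.

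However, there is a genuine sign error in your uniqueness argument, and it is not incidental: it is precisely the difficulty the paper is built to overcome. Take two solutions, set $w=u_1-u_2$, and test the difference of the equations with $w_+$. You obtain
\[
\int_M |\nabla w_+|^2\,\dvn \;=\; \sum_i c_i\int_{\Gamma_i}\bigl(e^{u_1}-e^{u_2}\bigr)w_+\,\dSn \;-\;\int_M\bigl(e^{2u_1}-e^{2u_2}\bigr)w_+\,\dvn .
\]
On $\{w>0\}$ the bulk term is indeed $\le 0$. But since $c_i\ge 0$ and $t\mapsto e^t$ is increasing, the boundary integrand $c_i(e^{u_1}-e^{u_2})w_+$ is $\ge 0$ on $\{w>0\}\cap\Gamma_i$, so the boundary contribution enters with the \emph{wrong} sign and does not force $w_+\equiv0$. (With $c_i\le 0$ your argument would work — that is the easy case alluded to after Lemma \ref{lemma:existence}.) The same issue is hidden in your IFT step: you assert that the bilinear form $\int|\nabla v|^2+2\int e^{2u_c}v^2-\sum_i c_i\int_{\Gamma_i}e^{u_c}v^2$ is positive definite ``exactly because $c_i<1$,'' but this is not automatic; the negative boundary term must be absorbed by the interior terms, and the relevant inequality has to hold with a subunit coefficient. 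Strict convexity of $E_c$ also fails for the same reason ($-c_i\int e^u$ is concave), so your fallback does not rescue the argument either.

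What the paper actually does is establish a sharp trace estimate not on $(M,g_0)$ but on the \emph{solution surface} $(M,g_c)$: it first shows (Lemma \ref{lemma:collar}) that $(M,g_c)$ contains a closed geodesic homotopic to each $\Gamma_i$ and that $\Gamma_i$ sits inside an explicit hyperbolic collar, and then (Lemma \ref{lemma:trace2}) it derives the estimate $\int_{\bM}(k_g+\eps)\,|w|\,\dSg\le(1-\eps)\int_M(|w|+|dw|_g)\,\dvg$. This is what converts $c_i<1$ into a strictly coercive bilinear form for the linearised problem in $(M,g_c)$ (Lemma \ref{lemma:apriori}(ii)), and it is used both for uniqueness (via the quantitative a priori bound of Lemma \ref{lemma:ubc}, taking $b=0$) and for the $C^1$-dependence (via an explicit estimate on $\beta_\eps=u_{c+\eps b}-u_c-\eps v_{c,b}$ in $H^1$ rather than a Hölder-space IFT, although an IFT packaging should also work once coercivity is in hand). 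So the missing idea in your proposal is not just a technical trace lemma on the fixed background $(M,g_0)$, but the observation that every hyperbolic metric with boundary curves of constant geodesic curvature in $[0,1)$ has an interior geodesic and a hyperbolic collar around each $\Gamma_i$, and that the resulting collar geometry yields a trace inequality with the sharp coefficient $(1-\eps)$ in front of both $\int|dw|$ and $\int|w|$ that is what ultimately dominates $k_g<1$.
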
 

We begin by establishing the existence of solutions to \eqref{eq:PDE} based on the direct method of calculus of variations. Solutions of \eqref{eq:PDE} correspond to 
critical points of  
\beq 
\label{eq:Int}
I_c(u)=\int_M\abs{d u}_{g_0}^2 +e^{2u}-2u \dvn -\sum_{i} 2c_i\int_{\Ga_i} e^u \dSn,\eeq
which is well defined on $H^1(M,g_0)$ as the Moser-Trudinger inequality \cite{Moser-Trudinger} and its trace-versions, see e.g.~\cite{Li-Liu-MT}, ensure in particular that for any $q<\infty$ 
\beq \label{est:MT} 
\sup_{u\in H^1(M,g_0), \norm{u}_{H^1(M,g_0)}\leq 1}\int_M e^{q \abs{u}}dv_{g_0}+\int_{\partial M} e^{q \abs{u}}  \dSn<\infty.\eeq
A well known consequence of this estimate is that for every $1<p<\infty$ 
the maps 
\beq
\label{eq:maps-MT}
H^1(M,g_0)\ni u\mapsto e^u\in L^p(M,g_0) \text{ and } H^1(M,g_0)\ni u\mapsto \tr_{\bM}(e^u)\in L^p(\bM,g_0) \eeq
are compact operators: Any bounded sequence in $H^1$ has  a subsequence which converges weakly in $H^1$, strongly in $L^2$ and whose traces converge stongly in $L^2$. The corresponding  sequences $e^{u_n}$ and $\tr_{\bM}(e^{u_n})$ hence converge in measure 
and, thanks to \eqref{est:MT} (applied e.g.~for $q=2p$), are $p$-equiintegrable so converge strongly in $L^p$ by Vitali's convergence theorem. 

An immediate consequence of the compactness of the operators in \eqref{eq:maps-MT} is that $I_c$ is weakly lower semicontinuous on $H^1(M,g_0)$. Hence, to establish the existence of a minimiser of $I_c$ in $H^1(M,g_0)$, and thus a solution of \eqref{eq:PDE}, it suffices to prove that 
$I_c$ is also coercive on $H^1(M,g_0)$. To deal with the negative boundary terms we will use that 
on hyperbolic surfaces with geodesic boundary curves the trace-theorem is valid in the following form, in particular with  leading order term on the right hand side appearing with a factor of $1$.
\begin{lemma}\label{lemma:trace}
For any $\bar L<\infty$ there exists a constant $C_1=C_1(\bar L)<\infty$ so that the estimate 
\beq
\label{est:trace}
\int_{\bM} \abs{w} \dSn \leq \int_M\abs{d w}_{g_0} \dvn +C_1 \int_M \abs{w} \dvn
\eeq
holds true for any oriented hyperbolic surface $(M,g_0)$ with geodesic boundary curves of length $L_{g_0}(\Gamma_i)\leq \bar L$, $i=1,\ldots, k$, and every $w\in W^{1,1}(M,g_0)$.
\end{lemma}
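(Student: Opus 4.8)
The plan is to reduce the trace inequality to a purely local statement near each boundary curve $\Gamma_i$, exploiting the fact that a hyperbolic surface with geodesic boundary of length $\ell_i = L_{g_0}(\Gamma_i)$ looks, in a uniform collar, exactly like a standard half-collar in the hyperbolic plane whose geometry depends only on $\ell_i$. Concretely, each boundary geodesic $\Gamma_i$ has a one-sided collar neighbourhood $\mathcal{A}_i$ on which $g_0$ is isometric to the flat cylinder $(\rho,\theta)$, $\rho\in[0,\rho_0]$, $\theta\in\R/\ell_i\Z$, carrying the metric $d\rho^2 + \cosh^2\!\rho\, d\theta^2$, with $\Gamma_i=\{\rho=0\}$ and $n = -\partial_\rho$. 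The collar width $\rho_0$ can be taken to be a universal constant (say the standard collar half-width, which only shrinks as $\ell_i$ grows but is bounded below once $\ell_i\le\bar L$), so all constants below depend only on $\bar L$ through $\ell_i\le\bar L$.

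On such a collar, for $w\in W^{1,1}$ I would write, for each fixed $\theta$ and a weight function $\phi(\rho)$ to be chosen with $\phi(0)=1$,
\beq
\abs{w(0,\theta)} \le \int_0^{\rho_0}\Bigl( \abs{\partial_\rho w(\rho,\theta)}\,\phi(\rho) + \abs{w(\rho,\theta)}\,\abs{\phi'(\rho)}\Bigr)\,d\rho + \abs{\phi(\rho_0)}\,\abs{w(\rho_0,\theta)},
\eeq
integrate in $\theta$ over $[0,\ell_i)$, and match the measures: $dS_{g_0}=d\theta$ on $\Gamma_i$ while $dv_{g_0}=\cosh\rho\, d\rho\, d\theta$ and $\abs{dw}_{g_0}\ge \abs{\partial_\rho w}$. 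The point is to pick $\phi(\rho)=1/\cosh\rho$, so that $\phi(\rho)\cosh\rho \equiv 1$ — this is precisely what produces the leading constant $1$ in front of $\int_M\abs{dw}_{g_0}\,dv_{g_0}$, which is the non-negotiable feature of the statement. Then $\abs{\phi'(\rho)}\cosh\rho = \sinh\rho/\cosh\rho \le 1$ and $\abs{\phi(\rho_0)}\cosh\rho_0 = 1$, so both remaining terms are controlled by $C\int_{\mathcal{A}_i}\abs{w}\,dv_{g_0}$ with $C=C(\bar L)$; note in particular that the boundary term at $\rho=\rho_0$ is an interior integral (over the hypersurface $\{\rho=\rho_0\}$) which one bounds by a one-dimensional trace/Fubini argument on the collar, again absorbing into $C\int_{\mathcal{A}_i}\abs{w}\,dv_{g_0}$, or alternatively one integrates that term once more in $\rho$ over a subcollar to convert it to a volume integral directly. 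Summing over $i=1,\dots,k$ and using that the collars $\mathcal{A}_i$ are disjoint subsets of $M$ gives \eqref{est:trace}.

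The one subtlety — and the step I expect to require the most care — is making precise that the collar width $\rho_0$ and the collar chart are genuinely uniform in the surface, i.e. depend only on the bound $\bar L$ and not on any further features of $(M,g_0)$. This is a standard consequence of the collar lemma for hyperbolic surfaces (the collar about a simple closed geodesic of length $\ell$ has half-width $w(\ell)=\operatorname{arsinh}(1/\sinh(\ell/2))$, which is decreasing in $\ell$, hence bounded below by $w(\bar L)>0$ for $\ell\le\bar L$); for a \emph{boundary} geodesic one uses the doubled surface $\widehat M = M\cup_{\partial M}\bar M$, on which $\Gamma_i$ becomes an interior simple closed geodesic, so the two-sided collar in $\widehat M$ restricts to the desired one-sided collar in $M$ of half-width at least $w(\bar L)$. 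One should also note the chart is exact: in Fermi coordinates $(\rho,\theta)$ based on a geodesic of length $\ell$ in a hyperbolic surface the metric is \emph{exactly} $d\rho^2+\cosh^2\!\rho\,d\theta^2$ on the whole embedded collar, so no error terms arise and the constant $1$ is genuinely attained. A density argument extends the inequality from smooth $w$ to all of $W^{1,1}(M,g_0)$, which is routine. The rest is the elementary one-dimensional fundamental-theorem-of-calculus estimate displayed above, with no further obstacles.
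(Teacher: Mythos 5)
Your proof is correct and takes essentially the same route as the paper: both invoke the collar lemma to obtain a uniform one-sided collar around each boundary geodesic (width bounded below in terms of $\bar L$) and then run a fundamental-theorem-of-calculus/averaging argument there, with the sharp constant $1$ in front of the gradient term coming from the explicit collar metric — you work in Fermi coordinates with weight $\phi=1/\cosh\rho$, while the paper uses conformal collar coordinates and a euclidean cylinder trace estimate, which is the same computation in a different chart. One tiny slip: to absorb $\int\abs{w}\,\abs{\phi'}\,d\rho\,d\theta$ into $C\int\abs{w}\,dv_{g_0}$ you need $\abs{\phi'}/\cosh\rho$ (not $\abs{\phi'}\cosh\rho$) to be bounded, since $d\rho\,d\theta=dv_{g_0}/\cosh\rho$; both quantities are at most $1$ here, so the argument goes through unchanged.
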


\begin{proof}[Proof of Lemma \ref{lemma:trace}] 
We derive this estimate from the corresponding trace-estimate 
\beq 
\label{est:eucl-cyl}
\int_{\{0\}\times S^1} \abs{w} d\th\leq \int_{0}^{X}\int_{S^1} \abs{\partial_s w} d\th ds+X^{-1} \int_{0}^{X}\int_{S^1} \abs{w} d\th ds
\eeq 
on euclidean cylinders $[0,X]\times S^1$ and the properties of hyperbolic collars as follows. We first recall that the classical Collar lemma of Keen-Randol \cite{randol} yields the existence of pairwise disjoint neighbourhoods 
$\Col(\Gamma_i)$ of the boundary curves which are isometric to the cylinders 
$(-X(\ell_i),0]\times S^1$ equipped with  
$\rho_{\ell_i}(s)^2 (ds^2+d\theta^2)$, where
$\ell_i=L_{g_0}(\Gamma_i) $ and where 
\beq \label{def:rho-X}
\rho_{\ell}(s)=\tfrac{\ell}{2\pi} (\cos(\tfrac{\ell}{2\pi}s))^{-1} \text{ and 
} 
X(\ell)=\tfrac{2\pi}{\ell}\big(\tfrac\pi2-\arctan(\sinh(\tfrac{\ell}{2})) \big),\eeq
with the 
boundary curve $\Ga_i$ corresponding to $\{0\}\times S^1$. 
We hence obtain from \eqref{est:eucl-cyl} that
\beqas 
\int_{\Gamma_i}\abs{w} \dSn &=
\rho_{\ell_i}(0)\int _{\{0\}\times S^1} \abs{w} d\th
\leq \int_{0}^{X(\ell_i)}\int_{S^1} \rho_{\ell_i}^{-1}\abs{\partial_s w} \rho_{\ell_i}^2 d\th ds+\frac{1}{X(\ell_i)\rho_{\ell_i}(0)} \int_{0}^{X(\ell_i)}\int_{S^1} \abs{w} \rho_{\ell_i}^2d\th ds\\
&\leq \int_{\Col(\Gamma_i)} \abs{d w}_{g_0} \dvn+c_{\bar L}^{-1} \int_{\Col(\Gamma_i)} \abs{w} \dvn,
\eeqas
 for every $i$, where we use that 
$\rho_\ell(s)\geq \rho_\ell(0) \geq \frac{c_{\bar L}}{X(\ell)}$ for some  $c_{\bar L}>0$ and  $\ell\in (0,\bar L]$.
As  the collars are disjoint this implies the claim of the lemma. 
\end{proof}

Returning to the proof of the first part of Proposition \ref{prop:existence-PDE}, and hence of the coercivity of $I_c$ defined in \eqref{eq:Int}, we now set
  $\bar c:=\max\{c_i\}<1$ and apply Lemma \ref{lemma:trace} to  bound 
\beqas
\sum_i c_i\int_{\Gamma_i}e^u\dSn  &\leq \bar c\int_{\bM}e^u \dSn 
\leq  \frac12 \bar c\int\abs{d u}_{g_0}^2+e^{2u} \dvn +C_1\int e^u \dvn,
\eeqas
where all integrals are computed over $M$ unless specified otherwise. 
Writing $-2u=2\abs{u}-4u^+$ for $u^+=\max\{u,0\}$, we can thus estimate 
\beqas 
I_c(u) &\geq  (1-\bar c)\int \abs{d u}_{g_0}^2+e^{2u} \dvn + 2\int \abs{u} \dvn -4 \int u^+ \dvn -2C_1\int e^u \dvn\\
& \geq  \thalf (1-\bar c)\int \abs{d u}_{g_0}^2+e^{2u} \dvn +2\int \abs{u} \dvn 
+\int\tfrac{1}{2}(1-\bar c)e^{2u^+}-4u^+-2C_1e^{u^+} \dvn\\
&\geq  \thalf(1-\bar c)\int \abs{d u}_{g_0}^2+e^{2u} \dvn+2\int \abs{u} \dvn -C
\eeqas
for a constant $C$ that is allowed to depend on $\bar c\in [0,1)$, $\chi(M)$, and hence $\Area(M,g)=-2\pi\chi(M)$, and an upper bound $\bar L$ on the length of the boundary curves of $(M,g_0)$.

Coercivity of $I_c$ now easily follows: 
If  $\norm{d u_n}_{L^2(M,g_0)}\to \infty$ then clearly 
$I_c(u_n)\to \infty$ while for sequences with 
$\norm{u_n}_{H^1(M,g_0)}\to \infty$ and $\norm{d u_n}_{L^2(M,g_0)}\leq C$, the Poincar\'e inequality implies that also
$\abs{\fint u_n \dvn}\to \infty$, so 
$I_c(u_n)\geq 2\int \abs{u_n} \dvn -C \geq 2\Area(M,g_0)\cdot \abs{\fint  u_n \dvn} -C\to \infty.$

This establishes the existence of a weak solution $u_c\in H^1(M,g_0)$ to \eqref{eq:PDE} for any $c\in[0,1)^k$. 
Since the non-linearity in the Neumann-problem \eqref{eq:PDE} is subcritical, the 
regularity theorem \cite[Th\'eor\`eme 1]{Cherrier} of Cherrier applies and yields that every weak solution of \eqref{eq:PDE} is indeed smooth up to the boundary. At the same time we remark that we could not have used the results of \cite{Cherrier} to establish existence of solutions, as our boundary data has the wrong sign.

\begin{rmk}\label{rmk:cylinder-Z}
As we only use that the geodesic curvature $k_g$ is strictly less than $1$, the above proof indeed shows that for any given functions $k_i\in L^p(\Gamma_i)$, $p>1$, for which $k_i\leq \bar c$ for some $\bar c<1$, there exists a hyperbolic metric $g$ compatible to $\cc$ with $k_g=k_i$ on $\Gamma_i$, $i=1,\ldots, k$. 
\end{rmk}
We will prove the other claims of Proposition \ref{prop:existence-PDE} at the end of the section based on properties of the surfaces $(M,g_c)$ that we discuss now, including the following version of the collar lemma.

\begin{lemma} \label{lemma:collar}
Let $(M,g)$ be an oriented hyperbolic surface with boundary curves of constant geodesic curvature $\kg\vert_{\Gamma_i}\equiv c_i\in [0,1)$.  Then for each $i\in\{1,\ldots, k\}$ there exists a unique simple closed geodesic $\gamma_i$ in $(M,g)$ that is homotopic to $\Gamma_i$ and there exist pairwise  disjoint neighbourhoods $\Col(\Gamma_i)$ of the boundary curves $\Gamma_i$ in $(M,g)$ which are isometric to cylinders
$$(-X(\ell_i),Y(\ell_i,c_i)]\times S^1 \text{ with metric } \rho_{\ell_i}(s)^2(ds^2+d\th^2)$$
where $\rho_{\ell_i}$ and $X(\ell_i)$ are given by \eqref{def:rho-X},  $\ell_i=L_g(\gamma_i)$, and where 
\beq 
\label{def:Y}
Y(\ell_i,c_i)=\tfrac{2\pi}{\ell_i} \arcsin(c_i).
\eeq
In these coordinates $\Gamma_i$ corresponds to $\{Y(\ell_i,c_i)\}\times S^1$ while $\gamma_i$ corresponds to $\{0\}\times S^1$.
\end{lemma}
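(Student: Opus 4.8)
The plan is to reduce the statement to the classical Collar Lemma by enlarging $(M,g)$ to a complete hyperbolic surface. Fix $i$, let $r_i\in[0,\infty)$ be defined by $\tanh r_i=c_i$, and set $\ell_i:=L_g(\Gamma_i)\sqrt{1-c_i^2}>0$. Since $K_g\equiv-1$ and $\Gamma_i$ has constant geodesic curvature $c_i$, the Fermi (normal) coordinates based at $\Gamma_i$ give an explicit metric on a one-sided collar of $\Gamma_i$ in $(M,g)$: it is isometric to $(r_i-\eps,r_i]\times S^1$ with coordinates $(\tau,\theta)$, $\theta\in\R/2\pi\Z$, carrying the hyperbolic funnel metric $d\tau^2+\big(\tfrac{\ell_i}{2\pi}\big)^2\cosh^2(\tau)\,d\theta^2$, with $\Gamma_i=\{\tau=r_i\}$ and the outward normal pointing in the direction of increasing $\tau$. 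One can therefore glue onto $(M,g)$ along each $\Gamma_i$ the half-funnel $[r_i,\infty)\times S^1$ carrying the very same metric; because the two pieces are described by the same real-analytic expression across $\{\tau=r_i\}$, the result is a smooth complete hyperbolic surface $(\widehat M,\widehat g)\supset(M,g)$ with exactly $k$ funnel ends, whose convex core $N$ (the complement of the open funnels) is a compact hyperbolic surface with geodesic boundary; moreover $\widehat M=N\cup\bigcup_i\big([0,\infty)\times S^1\big)$ and $M=N\cup\bigcup_i\big([0,r_i]\times S^1\big)$, the boundary component $\gamma_i:=\{\tau=0\}\times S^1$ of $N$ having length $\ell_i$.

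Next I would extract the geodesic. By construction $\gamma_i$ is a simple closed geodesic of $\widehat M$ lying in the interior of $M$, homotopic to $\Gamma_i$ within the $i$-th half-funnel and hence within $M$, with $L_g(\Gamma_i)^2-L_g(\gamma_i)^2=(c_iL_g(\Gamma_i))^2$, which is \eqref{claim:rel-length} for $d=c_iL_g(\Gamma_i)$. Since on a complete hyperbolic surface each free homotopy class of a closed curve contains at most one closed geodesic, and any closed geodesic of $(M,g)$ homotopic to $\Gamma_i$ is a closed geodesic of $\widehat M$ in that class, $\gamma_i$ is the unique simple closed geodesic of $(M,g)$ homotopic to $\Gamma_i$.

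Then I would assemble the collar. Applying the Collar Lemma of Keen--Randol \cite{randol} to the compact hyperbolic surface with geodesic boundary $N$ (equivalently, to its double) produces pairwise disjoint embedded one-sided collars of the $\gamma_i$, the one at $\gamma_i$ being isometric to $(-w(\ell_i),0]\times S^1$ with the above funnel metric, $w(\ell_i)$ the Keen--Randol half-width. Glueing this to the sub-funnel $[0,r_i]\times S^1\subset M$ (embedded, being part of the $i$-th funnel end) yields an embedded cylinder: the two pieces can overlap only along $\gamma_i$, since the distance to $\gamma_i$ inside each piece is realised by $|\tau|$. Finally, passing to the conformal coordinate $s$ with $ds=d\tau/\rho$, $\rho:=\tfrac{\ell_i}{2\pi}\cosh\tau$, a direct computation identifies $\rho$, as a function of $s$, with $\rho_{\ell_i}(s)$ from \eqref{def:rho-X}, turns the range $\tau\in(-w(\ell_i),r_i]$ into $s\in(-X(\ell_i),Y(\ell_i,c_i)]$ with $X,Y$ as in \eqref{def:rho-X}, \eqref{def:Y}, and exhibits $\Col(\Gamma_i)$ with all the asserted properties; pairwise disjointness of the $\Col(\Gamma_i)$ follows from that of the collars of the $\gamma_i$ together with the disjointness of distinct funnel ends.

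The real issue is global rather than local: Fermi coordinates instantly pin down the metric near $\Gamma_i$, but only a global argument can guarantee that the geodesic $\gamma_i$ actually lies inside $M$ and that the cylinder $(-X(\ell_i),Y(\ell_i,c_i)]\times S^1$ embeds with no self-overlap, which is exactly what passing to $\widehat M$ and invoking the Collar Lemma delivers. The points that need care are that the funnel glueing produces a genuinely smooth hyperbolic metric, and that the one-sided collar of $\gamma_i$ in $N$ and the glued-on funnel assemble into a single embedded cylinder of the stated isometry type.
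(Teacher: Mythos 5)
Your argument is correct, but it takes a genuinely different route from the paper's. Where the paper produces the geodesic $\gamma_i$ by minimising the Dirichlet energy of maps $S^1\to (M,g)$ in the homotopy class of $\Gamma_i$ (via Eells--Sampson) and then identifies the two-sided collar by first invoking the Collar Lemma on $M\setminus\bigcup_i\Col^+(\Gamma_i)$ and then \emph{classifying} the closed curves of constant geodesic curvature in the model cylinder through the extremal-point comparison \eqref{eq:curv-circles}, you instead compute the Fermi metric off $\Gamma_i$ directly from the Jacobi equation, glue on funnels to obtain a complete surface $\widehat M$, and read off $\gamma_i$ as the core geodesic bounding the convex core. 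After that the two proofs converge: both apply the Keen--Randol Collar Lemma to the truncated surface $N=M_0$ with geodesic boundary and reattach the half-collar between $\gamma_i$ and $\Gamma_i$. Your route trades the harmonic-map existence theorem and the curvature-classification step for the structure theory of funnel ends of complete hyperbolic surfaces, and it has the advantage of producing the quantitative relations $\ell_i = L_g(\Gamma_i)\sqrt{1-c_i^2}$ and the parameter values $X,Y$ by explicit computation rather than by characterisation.

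One point you should spell out more carefully: after gluing the half-funnels $[r_i,\infty)\times S^1$ onto $\Gamma_i$, you assert that the core geodesic of the $i$-th funnel end of $\widehat M$ is exactly the circle $\{\tau=0\}$, and hence that $M = N\cup\bigcup_i\big([0,r_i]\times S^1\big)$. This is not immediate from the glueing alone, since the glued-on coordinate $\tau$ only runs over $[r_i,\infty)$ and $\{\tau=0\}$ is not in the attached piece; \emph{a priori} one only knows that the end is a funnel with some core geodesic length $\ell'$ and some distance $r'$ from $\Gamma_i$ to that geodesic. To close this, either compare the intrinsic funnel parametrisation $\tau'$ (distance to the core geodesic) with the glued coordinate $\tau$ on $\{\tau\geq r_i\}$ and show by matching the warping factors $(\ell_i/2\pi)\cosh\tau = (\ell'/2\pi)\cosh(\tau+a)$ for all large $\tau$ that $a=0$ and $\ell'=\ell_i$; or lift $\Gamma_i$ to the universal cover $\mathbb H^2$ of $\widehat M$, observe that a curve of constant geodesic curvature $c_i<1$ is a hypercycle equidistant (at distance $\mathrm{artanh}\, c_i = r_i$) from a unique geodesic invariant under the covering transformation, and project. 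With this made explicit, the proof is complete.
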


\begin{proof}
We note that since our surface is hyperbolic, 
 the Dirichlet energy of maps $u:S^1\to (M,g)$ has a unique minimiser in the homotopy class of $\Gamma_i$, c.f.~\cite{Eells-Sampson}, which  coincides with $\Gamma_i$ if $c_i=0$. Otherwise, $\Gamma_i$ has positive geodesic curvature so the image of this minimiser must lie in the interior of $M$ and hence be the desired simple closed geodesic. 

We let $\Col^+(\Gamma_i)$  be the connected component of $\M\setminus \bigcup_i \gamma_i$ that is bounded by $\gamma_i$ and 
$\Gamma_i$ and
set $M_0:= M\setminus \bigcup_i \Col^+(\Gamma_i)$. 
As $(M_0,g)$ is hyperbolic with geodesic boundary, the Collar lemma \cite{randol} gives disjoint neighbourhoods $\Col^-(\gamma_i)$ of $\gamma_i$ in $M_0$ that are 
isometric to $\big((-X(\ell_i),0]\times S^1,\rho_{\ell_i}(s) (ds^2+d\th^2)\big)$, with $\rho_\ell$ and $X(\ell)$ given by \eqref{def:rho-X}. 
The resulting disjoint neighbourhoods $\Col(\Gamma_i):= \Col^-(\gamma_i)\cup \Col^+(\Gamma_i)$ of $\Gamma_i$ in our original surface $M$ are bounded by curves of constant geodesic curvature and are isometric to a subset of the complete hyperbolic cylinder $ \big((-\tfrac{\pi^2}{\ell_i}, \tfrac{\pi^2}{\ell_i})\times S^1, \rho_{\ell_i}^2(ds^2+d\th^2)\big)$ around a geodesic of length $\ell_i$, where such an isometry can e.g.~be obtained by using the fibration of $\Col(\Gamma_i)$ by the geodesics that cross $\gamma_i$ orthogonally. We note that the only closed curves of constant geodesic curvature in such a cylinder are circles $\{s\}\times S^1$, whose curvature is
\beq k_g=\rho^{-1}\frac{\partial}{\partial s} \log(\rho(s))+k_{g_{eucl}}=\rho^{-2}\partial_s \rho =\sin(\tfrac{\ell}{2\pi}s),
\label{eq:curv-circles} \eeq
compare \eqref{eq:trafo-geod}; indeed, comparing the curvature of any other closed curve $\si$ 
with the one of the circles 
$\{s_{\pm}\}\times S^1$ through points $P_\pm=(s_\pm,\th_\pm)$ of $\si$ with extremal $s$ coordinate we get
$$\kg(\sigma)(P_+)\geq \kg(\{s_+\}\times S^1) =\sin(\tfrac{\ell}{2\pi}s_+)>\sin(\tfrac{\ell}{2\pi}s_-)
\geq  \kg(\sigma)(P_-).$$
The collar neighbourhood $\Col(\Gamma_i)$ obtained above must hence be isometric to a cylinder $((-X,Y]\times S^1,\rho_{\ell_i}^2(ds^2+d\th^2))$ where, by \eqref{eq:curv-circles}, $X$ and $Y$ are as described in the lemma. 
\end{proof}

We also use the following standard property of Riemann surfaces.  
\begin{rmk} \label{rmk:cyl}
For any given oriented Riemann surface $(M,\mathtt{c})$ with boundary curves $\Gamma_1,\ldots \Gamma_k$ 
there exists a number $\bar Z$ so that the following holds true.
Let $U$ be any neighbourhood of one of the boundary curves $\Gamma_i$ which is conformal to a cylinder $(0,Z]\times S^1$. Then 
$Z\leq \bar Z.$
\end{rmk}
We include a short proof of this remark in the appendix and combine it with Lemma \ref{lemma:collar} to get
\begin{cor}
\label{cor:lower-bound-ell}
For any  conformal structure $\mathtt{c}$ on $M$ there exists a $\de>0$ so that the following holds true. Let  $g$ be any hyperbolic metric on $M$ for which $\kg\vert_{\Gamma_i}\equiv c_i\in [0,1)$, $i=1\ldots k$, and let $\gamma_i$ be the geodesics in $(M,g)$ that are homotopic to the boundary curves $\Gamma_i$. Then 
\beq
\label{est:lower-bound-ell}
\ell_i:=L_{g_c}(\gamma_i)\geq \delta
\text{ and }
L_{g_c}(\Gamma_i)=\frac{\ell_i}{\sqrt{1-c_i^2}} \geq \frac{\de}{\sqrt{1-c_i^2}},
\eeq
in particular $L_{g_c}(\Gamma_i)\to \infty$ as $c_i\upto 1$.
\end{cor}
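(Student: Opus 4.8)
The plan is to combine the collar description from Lemma~\ref{lemma:collar} with the abstract ``bounded cylinder'' property recorded in Remark~\ref{rmk:cyl}. Fix the conformal structure $\mathtt c$ and let $\bar Z=\bar Z(M,\mathtt c)$ be the constant from that remark. Suppose $g$ is a hyperbolic metric on $M$ with $\kg\vert_{\Gamma_i}\equiv c_i\in[0,1)$ and let $\gamma_i$ be the simple closed geodesics homotopic to $\Gamma_i$ whose existence is guaranteed by Lemma~\ref{lemma:collar}. That lemma tells us that $\Gamma_i$ has a neighbourhood isometric to the hyperbolic cylinder piece $(-X(\ell_i),Y(\ell_i,c_i)]\times S^1$ with metric $\rho_{\ell_i}(s)^2(ds^2+d\th^2)$, where $\ell_i=L_g(\gamma_i)$.

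The key step is to extract from this collar a \emph{conformal} cylinder $(0,Z]\times S^1$ sitting in $(M,\mathtt c)$ and estimate its conformal modulus $Z$ from below by a function of $\ell_i$ that blows up as $\ell_i\to0$. Since the metric on the collar is conformal to the flat metric $ds^2+d\th^2$, the neighbourhood $\Col(\Gamma_i)$ is conformally a flat cylinder of circumference $2\pi$ and length $X(\ell_i)+Y(\ell_i,c_i)\geq X(\ell_i)$; after rescaling to circumference $1$ this is a cylinder $(0,Z_i]\times S^1$ with $Z_i=\tfrac{1}{2\pi}\bigl(X(\ell_i)+Y(\ell_i,c_i)\bigr)\geq \tfrac{1}{2\pi}X(\ell_i)$. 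From the explicit formula \eqref{def:rho-X}, $X(\ell)=\tfrac{2\pi}{\ell}\bigl(\tfrac\pi2-\arctan(\sinh(\tfrac\ell2))\bigr)\to\infty$ as $\ell\downarrow0$; in particular $X$ is a continuous strictly decreasing function on $(0,\infty)$. Remark~\ref{rmk:cyl} then forces $Z_i\leq\bar Z$, i.e.\ $X(\ell_i)\leq 2\pi\bar Z$, and since $X$ is decreasing this gives $\ell_i\geq X^{-1}(2\pi\bar Z)=:\delta>0$, a bound depending only on $(M,\mathtt c)$. This proves the first inequality in \eqref{est:lower-bound-ell}; the identity $L_{g}(\Gamma_i)=\ell_i/\sqrt{1-c_i^2}$ is immediate from Lemma~\ref{lemma:collar}, since $\Gamma_i$ corresponds to the circle $\{Y(\ell_i,c_i)\}\times S^1$ of radius $\rho_{\ell_i}(Y(\ell_i,c_i))=\tfrac{\ell_i}{2\pi}\bigl(\cos(\tfrac{\ell_i}{2\pi}Y(\ell_i,c_i))\bigr)^{-1}=\tfrac{\ell_i}{2\pi}(\cos(\arcsin c_i))^{-1}=\tfrac{\ell_i}{2\pi}(1-c_i^2)^{-1/2}$, so its length is $2\pi$ times this, namely $\ell_i/\sqrt{1-c_i^2}$. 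The final claim $L_{g_c}(\Gamma_i)\to\infty$ as $c_i\uparrow1$ follows by combining $\ell_i\geq\delta$ with $1/\sqrt{1-c_i^2}\to\infty$.

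One small point to be careful about is the direction in which the collar extends: in Remark~\ref{rmk:cyl} the cylinder $(0,Z]\times S^1$ is a neighbourhood of $\Gamma_i$ with $\Gamma_i$ presumably corresponding to one end, whereas in Lemma~\ref{lemma:collar} the cylinder is $(-X(\ell_i),Y(\ell_i,c_i)]\times S^1$ with $\Gamma_i=\{Y(\ell_i,c_i)\}\times S^1$; these match after a trivial reparametrisation $s\mapsto s+X(\ell_i)$ and a rescaling of the $\th$-variable, and the length/modulus of the resulting conformal cylinder is the ratio (total $s$-length)$/$(circumference), which is exactly $\tfrac{1}{2\pi}(X(\ell_i)+Y(\ell_i,c_i))$. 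I expect the only real ``obstacle'' to be purely bookkeeping: making sure that the collar neighbourhood $\Col(\Gamma_i)$ really is the sort of object Remark~\ref{rmk:cyl} applies to (an embedded conformal cylinder that is a genuine neighbourhood of the boundary curve), and tracking the $2\pi$ normalisations in the modulus; the analytic content is entirely in Remark~\ref{rmk:cyl} and the monotonicity of the explicit function $X(\ell)$.
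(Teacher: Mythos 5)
Your proof is correct and follows essentially the same route the paper takes: apply Remark~\ref{rmk:cyl} to the conformal cylinder supplied by Lemma~\ref{lemma:collar} to bound $X(\ell_i)$ above, use monotonicity of $X$ to turn that into a lower bound on $\ell_i$, and read off $L_g(\Gamma_i)=\ell_i/\sqrt{1-c_i^2}$ from the explicit formulas \eqref{def:rho-X} and \eqref{def:Y}. The only difference is cosmetic bookkeeping of the $2\pi$ normalisation, which the paper implicitly absorbs into $\bar Z$.
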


The bound on $\ell_i$ follows directly from Lemma \ref{lemma:collar} and Remark \ref{rmk:cyl}, applied for $Z= X(\ell_i)\to \infty$ as $\ell_i\to 0$, while the expression for $L_{g_c}(\Gamma_i)$ follows from  \eqref{def:rho-X} and
 \eqref{def:Y}.

For these surfaces we can now prove the following version 
of the trace-theorem.
\begin{lemma}\label{lemma:trace2}
Let $(M,g)$ be an oriented hyperbolic surface with boundary curves of constant geodesic curvature $\kg\vert_{\Gamma_i}\equiv c_i\in [0,1)$ and let
 $\Col^+(\Gamma_i)$ be the 
subset of the collar $\Col(\Gamma_i)$ described in Lemma \ref{lemma:collar} that is bounded by $\Gamma_i$ and the corresponding geodesic $\gamma_i$.
Then 
\beq
\label{est:trace2}
c_i\int_{\Gamma_i} \abs{w} \dSg \leq \int_{\Col^+(\Gamma_i)}\abs{w}  \dvg+c_i\int_{\Col^+(\Gamma_i)}\abs{dw}_g \dvg
\eeq
holds true for any $w\in W^{1,1}(M,g)$.
Furthermore,
there exists $\eps>0$, allowed to depend on both the lengths $\ell_i$ of the geodesics $\gamma_i$ and the curvatures $c_i$, so that for every $w\in W^{1,1}(M,g)$
\beq \label{est:trace3}
\int_{\bM} (\kg+\eps) \abs{w} \dSg\leq (1-\eps) \int_M \abs{w}\dvg+(1-\eps)\int_M \abs{dw}_g \dvg.
\eeq
\end{lemma}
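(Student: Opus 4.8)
The plan is to prove the two estimates separately, starting with \eqref{est:trace2}, which is the more elementary one, and then bootstrapping from it to \eqref{est:trace3}. For \eqref{est:trace2}, I would work entirely inside the explicit collar $\Col^+(\Gamma_i) \cong (0, Y(\ell_i, c_i)] \times S^1$ with metric $\rho_{\ell_i}(s)^2(ds^2 + d\th^2)$ furnished by Lemma \ref{lemma:collar}, where $\gamma_i$ is $\{0\}\times S^1$ and $\Gamma_i$ is $\{Y\}\times S^1$. By the fundamental theorem of calculus along the $s$-direction, for a.e.\ $\th$ one has $w(Y,\th) = w(s,\th) + \int_s^Y \partial_s w(\sigma,\th)\,d\sigma$ for any $s\in(0,Y)$; averaging this identity in $s$ over $(0,Y)$ and using $|w(Y,\th)| \le \fint_0^Y |w(s,\th)|\,ds + \int_0^Y |\partial_s w(\sigma,\th)|\,d\sigma$ gives, after integrating in $\th$ and inserting the conformal factors $\rho_{\ell_i}$ together with the relation $\rho_{\ell_i}^{-1}|\partial_s w| = |\partial_s w|_g \le |dw|_g$, the estimate \eqref{est:trace2} provided one has the two weight comparisons $Y(\ell_i,c_i)^{-1}\rho_{\ell_i}(Y)^{-1} \cdot \rho_{\ell_i}(s)^2 \le c_i^{-1}$... here I should instead track constants so that the coefficient in front of the $|w|$-term works out to exactly $1$ and the coefficient in front of $|dw|_g$ to exactly $c_i$. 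The clean way is to use $\rho_{\ell_i}(Y)\cdot Y = \frac{\ell_i}{2\pi}(\cos(\tfrac{\ell_i}{2\pi}Y))^{-1}\cdot \tfrac{2\pi}{\ell_i}\arcsin(c_i) = \arcsin(c_i)/\sqrt{1-c_i^2}$ and the monotonicity $\rho_{\ell_i}(s)\le \rho_{\ell_i}(Y)$ on $(0,Y)$, together with the elementary inequality $c_i \arcsin(c_i) \le \sqrt{1-c_i^2}\,c_i \cdot (\text{something})$; I expect the arithmetic to close because $\arcsin(c)\le c/\sqrt{1-c^2}$... let me just say: the weights are chosen so that the identity $\rho_{\ell_i}(Y)\int_{\{Y\}\times S^1}|w|\,d\th = \int_{\Gamma_i}|w|\,\dSg$ combined with $\rho_{\ell_i}\ge\rho_{\ell_i}(Y)$ on the collar and the value of $Y$ yields \eqref{est:trace2} after replacing $\arcsin(c_i)$ by its upper bound; I would check this numerically as a final step.

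For \eqref{est:trace3}, I would combine \eqref{est:trace2} with the fact that the $c_i$ are bounded strictly below $1$ — more precisely, by Corollary \ref{cor:lower-bound-ell} the lengths $\ell_i$ are bounded below, but here since $\eps$ is allowed to depend on $\ell_i$ and $c_i$ we do not even need uniformity. Since $\kg\vert_{\Gamma_i} = c_i < 1$, choose $\eps_0>0$ small with $c_i + \eps_0 \le (1-\eps_0)c_i'$ for some... actually the point is simply that we have slack: $\eps$-thickening both sides of \eqref{est:trace2} is possible because the coefficients $1$ and $c_i$ on the right are not critical. Concretely, from \eqref{est:trace2}, $(c_i+\eps)\int_{\Gamma_i}|w|\,\dSg \le \int_{\Col^+(\Gamma_i)}|w|\,\dvg + c_i\int_{\Col^+(\Gamma_i)}|dw|_g\,\dvg + \eps\int_{\Gamma_i}|w|\,\dSg$, and the last term is absorbed by applying \eqref{est:trace2} once more (which bounds $\int_{\Gamma_i}|w|\,\dSg$ by $c_i^{-1}$ times collar integrals) at the cost of a factor $\eps c_i^{-1}$; choosing $\eps$ small makes the total coefficient of both collar integrals at most $1-\eps$. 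Finally, since the collars $\Col^+(\Gamma_i)$ are pairwise disjoint and contained in $M$, summing over $i$ and bounding the collar integrals by the full integrals over $M$ gives \eqref{est:trace3}. The main obstacle I anticipate is getting the constants in \eqref{est:trace2} to come out exactly as stated (coefficient $1$ on $\int|w|$ and $c_i$ on $\int|dw|_g$) rather than with harmless but cosmetically worse constants — this requires a careful accounting of the conformal factor $\rho_{\ell_i}$ over the precise range $(0, Y(\ell_i,c_i)]$ and the elementary estimate $\arcsin(c)\,c \le \sqrt{1-c^2}$ (or a similar sharp trigonometric bound) to convert the geometric weights into the clean coefficients; everything else is routine.
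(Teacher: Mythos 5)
Both halves of your plan have genuine gaps.

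\textbf{Gap in the proof of \eqref{est:trace2}.} The uniform average $\fint_0^Y$ cannot yield the sharp constants. With $Y = Y(\ell_i,c_i)=\tfrac{2\pi}{\ell_i}\arcsin(c_i)$ and $\rho_{\ell_i}(Y)=\tfrac{\ell_i}{2\pi}(1-c_i^2)^{-1/2}$, your $|w|$-term after multiplying by $c_i\rho_{\ell_i}(Y)$ has coefficient $\tfrac{c_i\rho_{\ell_i}(Y)}{Y}$, and to dominate it by the density $\rho_{\ell_i}(s)^2$ in $\dvg$ (worst case $s=0$) you would need $c_i \le \sqrt{1-c_i^2}\arcsin(c_i)$; but $\sqrt{1-c^2}\arcsin(c)<c$ for all $c\in(0,1)$ (the derivative of $\sqrt{1-c^2}\arcsin(c)$ is $1-\tfrac{c\arcsin c}{\sqrt{1-c^2}}<1$), so the inequality fails, and the estimate you suggest checking, $\arcsin(c)\,c\le\sqrt{1-c^2}$, is simply false for $c$ near $1$. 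The gradient term has the same problem in the opposite direction: with a uniform average you pick up a constant factor $\rho_{\ell_i}(Y)$, but since $\rho_{\ell_i}$ is \emph{increasing} on $[0,Y]$ you would need $\rho_{\ell_i}(Y)\le\rho_{\ell_i}(s)$, which fails. The paper avoids both problems by averaging against the Riemannian density $\rho_{\ell_i}(s_0)^2\,ds_0$ rather than $Y^{-1}ds_0$; the point is the Gauss--Bonnet identity $\int_0^s\rho_{\ell_i}^2 = \rho_{\ell_i}(s)\sin(\tfrac{\ell_i}{2\pi}s)$ (in particular $\int_0^{Y}\rho_{\ell_i}^2 = c_i\rho_{\ell_i}(Y)$), which makes the $|w|$-term come out as exactly $\int_{\Col^+}|w|\dvg$ and the gradient kernel come out as $\rho_{\ell_i}(s)\sin(\tfrac{\ell_i}{2\pi}s)\le c_i\rho_{\ell_i}(s)$, which is \emph{local} in $s$ and hence dominated by $c_i\rho_{\ell_i}(s)^2|dw|_g$ pointwise. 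This is not a cosmetic adjustment of constants; the unweighted average genuinely does not close.

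\textbf{Gap in the proof of \eqref{est:trace3}.} Your bootstrap from \eqref{est:trace2} alone cannot work, even granting \eqref{est:trace2}, because \eqref{est:trace2} is sharp: taking $w\equiv 1$ gives $c_iL_g(\Gamma_i)$ on the left and $\Area_g(\Col^+(\Gamma_i))=c_iL_g(\Gamma_i)$ on the right, so equality holds and there is no $\eps$ of slack to extract. Concretely, absorbing $\eps\int_{\Gamma_i}|w|\dSg$ by another application of \eqref{est:trace2} (divided by $c_i$) produces the coefficient $1+\eps c_i^{-1}>1$ on $\int_{\Col^+}|w|\dvg$, never $\le 1-\eps$; and the argument breaks outright when $c_i=0$, which the lemma allows. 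The paper's proof supplies the missing room by proving a second, crude trace estimate using the \emph{inner} half of the collar $\Col(\Gamma_i)\setminus\Col^+(\Gamma_i)$ (averaging the fundamental theorem of calculus over $s_0\in(-X_i,0]$), with a large constant $C_2$ on the $|w|$-term, and then takes a convex combination of the two estimates; the large constant is harmless because it is multiplied by $\eps$. Some version of that second estimate, drawing on a region disjoint from $\Col^+(\Gamma_i)$, is essential.
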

We note that the above lemma assures in particular that if  $w\in H^1(M,g)$, then 
\beq \label{est:trace-L2}
\int_{\bM} (\kg+\eps) w^2 \dSg\leq (1-\eps) \int_M \abs{dw}_g^2+2w^2\dvg.
\eeq
\begin{proof}[Proof of Lemma \ref{lemma:trace2}]
From Gauss-Bonnet and \eqref{eq:curv-circles} we obtain that
 for $s\in[0,Y_i]$, $Y_i=Y(\ell_i,c_i)$
\beqs 
\int_0^s\rho_{\ell_i}^2(x) dx=\tfrac{1}{2\pi}\Area_{g}([0,s]\times S^1)= 
\tfrac{1}{2\pi}\int_{\{s\}\times S^1} \kg dS_g=\rho_{\ell_i}(s)\kg\vert_{{\{s\}\times S^1}}=\rho_{\ell_i}(s)
\sin(\tfrac{\ell_i}{2\pi}s),
\eeqs
$(s,\th)$ collar coordinates on $\Col(\Gamma_i)$, 
in particular $\int_0^{Y_i}\rho_{\ell_i}^2=\rho_{\ell_i
}(Y_i)\cdot c_i$. Multiplying 
\beq 
\label{eq:proof-trace2}
\int_{\{Y_i\}\times S^1}\abs{w} d\theta=\int_{\{s_0\}\times S^1} \abs{w} d\theta+\int_{s_0}^{Y_i}\int_{S^1}\partial_s\abs{w} d\theta ds,
\eeq
with $\rho_{\ell_i}(s_0)^2$ and integrating over $s_0\in[0,Y_i]$ using Fubini 
hence gives the desired bound of
\beqa 
\label{est:trace-proof-trace}
c_i\int_{\Gamma_i}\abs{w} \dSg=&\int_{\Col^+(\Gamma_i)}\abs{w} \dvg +\int_0^{Y_i}\int_{S^1} \partial_s \abs{w}\cdot  \rho_{\ell_i}(s)\sin(\tfrac{\ell_i}{2\pi}s) d\theta ds\\
\leq & \int_{\Col^+(\Gamma_i)}\abs{w} \dvg +c_i\cdot 
\int_{\Col^+(\Gamma_i)}  \abs{dw}_g \dvg.
\eeqa
Multiplying \eqref{eq:proof-trace2} with $\rho_{\ell_i}(Y_i)$ and averaging over $s_0\in (-X_i,0]$, $X_i=X(\ell_i)$, also yields
\beqa \label{est:trace-proof-cor}
\int_{\Gamma_i}\abs{w} dS_g&\leq \rho_{\ell_i}(Y_i)\cdot X_i^{-1}
\int_{-X_i}^0\int_{S^1} \abs{w} d\theta ds+\rho_{\ell_i}(Y_i)\int_{-X_i}^{Y_i}\int_{S^1}\abs{\partial_s w} d\theta ds\\
&\leq C_2
\int_{\Col(\Gamma_i)\setminus \Col^+(\Gamma_i)}\abs{w} \dvg +C_3 \int_{\Col(\Gamma_i)}\abs{dw}_g \dvg\eeqa
now for constants $C_{2,3}$ that depend both on $\ell_i=2\pi\min\rho_{\ell_i}(\cdot)$ and $L_{g}(\Gamma_i)=2\pi \rho_{\ell_i}(Y_i)$. 
To obtain the second claim of the lemma, we now combine 
 \eqref{est:trace-proof-trace}, multiplied by $(1-\eps)$ for some $\eps\in (0,1)$ chosen below, and \eqref{est:trace-proof-cor}, multiplied by $(1+c_i)\cdot\eps$, to conclude that 
\beqas
(c_i+\eps)\int_{\Gamma_i}\abs{w} dS_g&\leq  (1-\eps)
\int_{\Col^+(\Gamma_i)}\abs{w} \dvg+C_2(1+c_i)\cdot\eps\int_{\Col(\Gamma_i)\setminus \Col^+(\Gamma_i)}\abs{w} \dvg \\
&\quad +[c_i(1-\eps)
+C_3(1+c_i)\cdot \eps] \int_{\Col(\Gamma_i)}\abs{dw}_g \dvg.\eeqas
For $\eps>0$ chosen small enough to ensure that 
$2C_2\eps\leq 1-\eps$ and $2C_3\eps\leq (1-c_i)(1-\eps)$, this yields the second claim \eqref{est:trace3} of the lemma as the collar neighbourhoods are disjoint. 
\end{proof}
We are now in a position to prove the following a priori bounds
for PDEs related to \eqref{eq:PDE}
\begin{lemma}\label{lemma:apriori}
Let $M$ be an oriented  surface with boundary curves $\Gamma_1,\ldots ,\Gamma_k$ and let $g$ be a metric on $M$ which satisfies \eqref{eq:c} for some $c\in [0,1)^k$. 
\newline
Then there exist constants $C_{4,5}$, allowed to depend both on $c$ and the underlying conformal structure, 
so that the following holds true for any  $f\in L^2(M,g)$ and $h\in L^2(\bM,g)$.
\begin{enumerate}
\item[(i)] 
Suppose that $w\in H^1(M,g)$ is a weak solution of 
\beqa
\label{eq:PDE-f-h}
-\Delta_g w&=1-e^{2w}+f \quad  \text{ in } M \quad \text{ with } \quad 
\nanuv{w} &=k_g (e^w-1)+h \text{ on } \bM \quad 
\eeqa
for which furthermore  $e^{w}\in H^1(M,g)$. Then 
\beq
\label{est:apriori-nl}
\int_M \abs{d u}_{g}^2 e^w+(e^w-1)^2 (e^w+1) \dvg +\int_{\bM} (e^w-1)^2 \dSg \leq C_4(\norm{f}_{L^2(M,g)}^2+\norm{h}_{L^2(\bM, g)}^2).
\eeq
 \item[(ii)] There exists a unique solution $v\in H^1(M,g)$ of the linearised problem 
  \beqa
\label{eq:PDE-f-h-lin}
-\Delta_g v+2v&=f \quad  \text{ in } M\quad\text{ with } \quad  \nanvg{g} &=k_g v+h \text{ on } \bM.
\eeqa
and we have that 
\beq
\label{est:apriori-l}
\norm{v}_{H^1(M,g)}^2 \leq  C_5(\norm{f}_{L^2(M,g)}^2+\norm{h}_{L^2(\bM,g)}^2).
\eeq
\end{enumerate}
\end{lemma}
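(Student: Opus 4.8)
The plan is to prove both parts by testing the equations against well-chosen functions and absorbing the boundary terms via the trace estimate \eqref{est:trace-L2}, which is precisely the tool that compensates for the fact that the Neumann data carries a factor $k_g$ of the wrong sign.

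For part (ii), the natural approach is the direct method: the bilinear form
\beqs
B(v,\varphi)=\int_M \langle dv, d\varphi\rangle_g + 2 v\varphi\,\dvg - \int_{\bM} k_g\, v\varphi\,\dSg
\eeqs
is the one whose critical points solve \eqref{eq:PDE-f-h-lin}, so I would show $B$ is bounded and coercive on $H^1(M,g)$ and apply Lax--Milgram to the functional $\varphi\mapsto \int_M f\varphi\,\dvg+\int_{\bM}h\varphi\,\dSg$ (which is bounded on $H^1$ by the ordinary trace theorem). Coercivity is exactly where \eqref{est:trace-L2} enters: writing $\int_{\bM}k_g v^2 = \int_{\bM}(k_g+\eps)v^2 - \eps\int_{\bM}v^2$ and applying \eqref{est:trace-L2} to the first term gives $\int_{\bM}k_g v^2 \le (1-\eps)\int_M |dv|_g^2+2v^2\,\dvg - \eps\int_{\bM}v^2$, whence $B(v,v)\ge \eps\int_M |dv|_g^2+2v^2\,\dvg + \eps\int_{\bM}v^2 \ge \eps\|v\|_{H^1}^2$. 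Uniqueness is immediate from coercivity, and the estimate \eqref{est:apriori-l} follows by testing with $v$ itself and using Cauchy--Schwarz together with the trace theorem to bound the right-hand side by $C(\|f\|_{L^2(M)}+\|h\|_{L^2(\bM)})\|v\|_{H^1}$; the constant $C_5$ then depends on $\eps$, hence on $c$ and the conformal structure via the collar geometry, as claimed.

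For part (i), the idea is to test \eqref{eq:PDE-f-h} with $\varphi=e^w-1$, which is admissible since $e^w\in H^1$ and $w\in H^1$ (subcritical exponential integrability as in \eqref{est:MT}). The Dirichlet term produces $\int_M \langle dw, d(e^w-1)\rangle_g = \int_M |dw|_g^2 e^w\,\dvg$; the zeroth-order term gives $\int_M(1-e^{2w})(e^w-1)\,\dvg = -\int_M(e^w-1)^2(e^w+1)\,\dvg$; and the boundary term, after integration by parts and substituting $\frac{\partial w}{\partial n_g}=k_g(e^w-1)+h$, yields $\int_{\bM}k_g(e^w-1)^2\,\dSg + \int_{\bM}h(e^w-1)\,\dSg$. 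Rearranging,
\beqs
\int_M |dw|_g^2 e^w + (e^w-1)^2(e^w+1)\,\dvg + \int_{\bM}(e^w-1)^2\,\dSg - \int_{\bM}(k_g+1)(e^w-1)^2\,\dSg = \int_M f(e^w-1)\,\dvg + \int_{\bM}h(e^w-1)\,\dSg .
\eeqs
Here is the crux: the troublesome term $\int_{\bM}(k_g+1)(e^w-1)^2$ must be controlled. Since $k_g=c_i<1$, we have $k_g+1<2$, but a crude bound is not enough; instead apply \eqref{est:trace-L2} with $w$ replaced by $e^w-1$ (legitimate as $e^w-1\in H^1$): after choosing $\eps$ small one controls $\int_{\bM}(k_g+\eps)(e^w-1)^2$ by $(1-\eps)\int_M |d(e^w-1)|_g^2 + 2(e^w-1)^2\,\dvg$, and $|d(e^w-1)|_g^2 = |dw|_g^2 e^{2w}$. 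One then needs the elementary pointwise inequality $(1-\eps)e^{2w} \le e^w$ on the region where this term is large, or more robustly splits into $\{w\le w_0\}$ and $\{w>w_0\}$; on the latter $(e^w-1)^2(e^w+1)$ dominates $2(e^w-1)^2$, so the negative contribution is absorbed into the bulk terms already on the left. The leftover $\int_{\bM}(1-\eps)(e^w-1)^2\cdot(\text{something}<1)$ versus $\int_{\bM}(e^w-1)^2$ closes with room to spare. Finally the right-hand side is handled by Young's inequality: $\int_M f(e^w-1) \le \tfrac{\eta}{2}\int_M (e^w-1)^2(e^w+1) + C_\eta\|f\|_{L^2}^2$ after noting $(e^w-1)^2 \le (e^w-1)^2(e^w+1)$, and similarly the boundary term $\int_{\bM}h(e^w-1)\le \tfrac{\eta}{2}\int_{\bM}(e^w-1)^2 + C_\eta\|h\|_{L^2(\bM)}^2$; choosing $\eta$ small relative to the coefficients secured above gives \eqref{est:apriori-nl}.

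The main obstacle, and the step requiring genuine care rather than routine computation, is the absorption of $\int_{\bM}(k_g+1)(e^w-1)^2\,\dSg$ in part (i). The naive estimate only buys a factor close to $2$ from $k_g<1$, whereas the clean trace bound \eqref{est:trace-L2} is tuned to a leading constant of essentially $1$; reconciling these forces one either to exploit the full cubic coercive term $(e^w-1)^2(e^w+1)$ on the set where $e^w$ is large, or to track the constants in \eqref{est:trace2}--\eqref{est:trace3} carefully enough that the $\eps$-room there dominates. I would organize the argument so that the region $\{e^w \ge 3\}$, say, is handled purely by the bulk cubic term (where $(e^w-1)^2(e^w+1)\ge 2(e^w-1)^2$ with the aid of the trace inequality applied to $e^w-1$), while on the complementary bounded region $e^w-1$ is bounded and the boundary term is controlled by $\int_{\bM}(e^w-1)^2$ directly, which appears with coefficient $1$ on the left after subtracting $k_g+1<2$ times a comparable quantity — here one again leans on $c_i$ being strictly below $1$ together with the $\eps$ gained from \eqref{est:trace3}. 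Everything else — existence and uniqueness in (ii), the final Young's-inequality bookkeeping — is standard.
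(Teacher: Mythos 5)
Your treatment of part (ii) matches the paper: coercivity of the bilinear form via the trace estimate \eqref{est:trace-L2}, existence by the direct method (Lax--Milgram is equivalent here), and the estimate by testing with $v$. Your choice of test function $e^w-1$ in part (i) is also the paper's. The gap is in how you absorb the boundary term $\int_{\bM}(k_g+\eps)(e^w-1)^2\,\dSg$.

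You propose applying the $L^2$ trace inequality \eqref{est:trace-L2} to the function $e^w-1$. That yields an upper bound involving $\int_M|d(e^w-1)|_g^2\,\dvg=\int_M e^{2w}|dw|_g^2\,\dvg$, whereas the coercive bulk term in $I$ carries only $e^{w}|dw|_g^2$. These two do not compare: $(1-\eps)e^{2w}\leq e^{w}$ holds only where $w\leq -\log(1-\eps)$, and on the complementary set the first-order discrepancy is not a zeroth-order quantity that $(e^w-1)^2(e^w+1)$ could absorb, so the "split into $\{w\leq w_0\}$ and $\{w>w_0\}$" device does not close. Moreover the trace inequality \eqref{est:trace-L2} is proved by integrating over collar neighbourhoods and cannot legitimately be restricted to a sublevel set $\{w\leq w_0\}$ of the unknown, which is not a collar.

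The move that actually closes the argument is to apply the $L^1$ trace inequality \eqref{est:trace3} not to $e^w-1$ but to the product $W:=(e^w-1)^2\in W^{1,1}(M,g)$. This gives
\[
\int_{\bM}(k_g+\eps)(e^w-1)^2\,\dSg\leq(1-\eps)\int_M (e^w-1)^2\,\dvg+(1-\eps)\int_M 2e^{w}|e^w-1|\,|dw|_g\,\dvg,
\]
and the single AM--GM inequality $2e^{w}|e^w-1|\,|dw|_g\leq e^{w}|dw|_g^2+e^{w}(e^w-1)^2$ shows the right-hand side is at most $(1-\eps)I$, since $(e^w-1)^2+e^w(e^w-1)^2=(e^w-1)^2(e^w+1)$. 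After inserting this, the $\eps$-Young bounds you already describe for $\int_M f(e^w-1)$ and $\int_{\bM}h(e^w-1)$ close the estimate with no region-splitting. The distinction between using \eqref{est:trace-L2} on $e^w-1$ and using \eqref{est:trace3} on $(e^w-1)^2$ is exactly the nontrivial content of this step, and your write-up flags the difficulty but does not supply this resolution.
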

\begin{proof}[Proof of Lemma \ref{lemma:apriori}]
Let $w$ be as in the first part of the lemma and let $\eps>0$ be as in Lemma \ref{lemma:trace2}. 
Testing 
 \eqref{eq:PDE-f-h} with $e^{w}-1\in H^1(M,g)$, we may estimate
\beqas 
I&:= \int\abs{d w}_{g}^2e^w+(e^w-1)^2(e^w+1) \dvg=\int_{\bM} \frac{\partial w}{\partial n_g}(e^w-1) \dSg+\int f\cdot (e^w-1) \dvg
\\
&=\int_{\bM} k_g (e^w-1)^2 +h(e^w-1)\dSg+\int f \cdot (e^w-1) \dvg\\
&\leq 
\int_{\bM} (\kg+\eps)(e^w-1)^2 \dSg-\tfrac{\eps}2\int_{\bM}(e^w-1)^2 \dSg +\tfrac{\eps}{2}I+
\tfrac{1}{2\eps}[\norm{h}_{L^2(\bM,g)}^2+\norm{f}_{L^2(M,g)}^2].
\eeqas
By \eqref{est:trace3}, the first term on the right is bounded by 
$(1-\eps)\int(1-e^w)^2+2 e^w \abs{dw}_g \abs{1-e^w}dv_g\leq (1-\eps)I$, so the first claim \eqref{est:apriori-nl} of the lemma immediately follows. 

To prove  the second part of the lemma, we 
use that 
the variational integral 
\[F_{f,h}(v):= \int \abs{d v}_g^2+2v^2+2fv\, \dvg-\int_{\bM}\kg v^2+2vh dS_g\]
associated with  \eqref{eq:PDE-f-h-lin} %
 is  coercive, as \eqref{est:trace-L2} implies
\beqa\label{est:lower-F}
F_{f,h}(v)&\geq \eps \int \abs{d v}_g^2+2v^2 dv_g
-2\norm{f}_{L^2(M,g)}\cdot \norm{v}_{L^2(M,g)}-
\norm{h}_{L^2(\bM,g)}\norm{v}_{L^2(\bM,g)}\\
 & \geq\eps \norm{v}_{H^1(M,g)}^2- 
(2\norm{f}_{L^2(M,g)}+C\norm{h}_{L^2(\bM,g)})\norm{v}_{H^1(M,g)}.
\eeqa
Hence  $F_{f,h}$ has a minimiser $v$ which is of course a solution of \eqref{eq:PDE-f-h-lin}, and satisfies $F_{f,h}(v)\leq F_{f,h}(0)=0$ which, combined with \eqref{est:lower-F}, furthemore yields the claimed 
 a priori estimate \eqref{est:apriori-l}.
Finally, this solution of \eqref{eq:PDE-f-h-lin} is unique as the difference $v$ of two solutions of  \eqref{eq:PDE-f-h-lin} 
satisfies 
$$0=\int \abs{d v}_g^2+2v^2 \dvg -\int_\bM \kg v^2 dS_g\geq \eps \norm{v}_{H^1(M,g)}^2,$$
again by  \eqref{est:trace-L2}, and must thus vanish. 
\end{proof}
As a next step towards completing the proof of Proposition \ref{prop:existence-PDE} we show 
\begin{lemma}
\label{lemma:ubc}
Let $M$ be as in Lemma \ref{lemma:apriori} and let $g$ be any metric for which \eqref{eq:c} holds true for some  $c\in[0,1)^k$. Then there exist numbers $0<\eps_0<1-\max{c_i}$ and $C_{6}<\infty$ so that for any $b\in [-\eps_0,\eps_0]^k$ and any hyperbolic metric 
$\tilde g=e^{2w} g$  with 
$\kgt\vert_{\Gamma_i}=c_i+b_i$, $i=1,\ldots, k$,
we have 
\beq 
\label{est:apriori-nl-b}
\int_M (e^w+1)(e^w-1)^2+e^w\abs{d w}_g^2 dv_g + \norm{w}_{H^1(M,g)}^2\leq C_6\max\abs{b_i}^2.
\eeq
In particular, the solution of \eqref{eq:c} is unique.
\end{lemma}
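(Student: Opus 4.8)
The plan is to apply the a priori estimate of Lemma \ref{lemma:apriori}(i) twice --- once for the metric $g$ and once for the metric $\tilde g$ --- and then to combine the two resulting bounds. Write $\beta:=\max_i\abs{b_i}\in[0,\eps_0]$. Since $c_i+b_i<1$, the regularity theorem of Cherrier invoked in the proof of Proposition \ref{prop:existence-PDE} applies to the equation satisfied by the conformal factor $w$ of $\tilde g=e^{2w}g$, so $w$ is smooth up to $\bM$ and in particular $e^{\pm w}\in H^1(M,g)\cap H^1(M,\tilde g)$. By \eqref{eq:trafo-Gauss}, \eqref{eq:trafo-geod} together with $K_g\equiv-1$ and $\kg\vert_{\Gamma_i}\equiv c_i$, the function $w$ solves $-\Delta_g w=1-e^{2w}$ in $M$ and $\nanuv{w}=\kg(e^w-1)+h$ on $\bM$ with $h\vert_{\Gamma_i}=b_ie^w=b_i(e^w-1)+b_i$, that is, \eqref{eq:PDE-f-h} with $f=0$. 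Since $\norm{h}_{L^2(\bM,g)}^2\leq 2\beta^2\big(\int_{\bM}(e^w-1)^2\dSg+\sum_iL_g(\Gamma_i)\big)$, Lemma \ref{lemma:apriori}(i) gives, after choosing $\eps_0$ small enough that $2C_4\eps_0^2\leq\tfrac12$ and absorbing $2C_4\beta^2\int_{\bM}(e^w-1)^2\dSg$ into the left-hand side,
\beqs
\int_M\abs{dw}_g^2e^w+(e^w-1)^2(e^w+1)\dvg+\int_{\bM}(e^w-1)^2\dSg\leq C\beta^2.
\eeqs
This already yields the part of \eqref{est:apriori-nl-b} that does not involve $\norm{w}_{H^1(M,g)}^2$.

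Next I would run the same argument from the side of $\tilde g$. The displayed bound gives $\norm{e^w-1}_{L^2(\Gamma_i,g)}\leq C\beta$, hence, by Cauchy--Schwarz, $\babs{L_{\tilde g}(\Gamma_i)-L_g(\Gamma_i)}=\babs{\int_{\Gamma_i}(e^w-1)\dSg}\leq C\beta$; so after shrinking $\eps_0$ the boundary lengths of $\tilde g$ stay comparable to those of $g$, and since also $c_i+b_i$ is bounded away from $\pm1$, Corollary \ref{cor:lower-bound-ell} and the relation there between $L_{\tilde g}(\Gamma_i)$ and $L_{\tilde g}(\gamma_i)$ show that the geodesic lengths $L_{\tilde g}(\gamma_i)$, and hence the collar geometry of $\tilde g$ from Lemma \ref{lemma:collar}, are controlled uniformly in $b\in[-\eps_0,\eps_0]^k$. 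Consequently the constants in Lemmas \ref{lemma:trace2} and \ref{lemma:apriori} for $\tilde g$ are uniformly bounded. (Here one uses that Lemmas \ref{lemma:collar}, \ref{lemma:trace2} and \ref{lemma:apriori} remain valid verbatim for boundary curvatures in $(-1,1)$, the only change being that the collar parameter $Y(\ell,c)=\tfrac{2\pi}{\ell}\arcsin(c)$ may be negative.) Applying Lemma \ref{lemma:apriori}(i) to the metric $\tilde g$ and the conformal factor $-w$, which satisfies $e^{2(-w)}\tilde g=g$, hence $-\Delta_{\tilde g}(-w)=1-e^{-2w}$ in $M$ and a boundary condition of the form \eqref{eq:PDE-f-h} with $f=0$ and $h\vert_{\Gamma_i}=-b_ie^{-w}$ (using $\kgt\vert_{\Gamma_i}\equiv c_i+b_i$), and absorbing exactly as before, yields
\beqs
\int_M\abs{dw}_{\tilde g}^2e^{-w}+(e^{-w}-1)^2(e^{-w}+1)\,dv_{\tilde g}+\int_{\bM}(e^{-w}-1)^2\,dS_{\tilde g}\leq C\beta^2.
\eeqs
Rewriting this in terms of $g$ via $dv_{\tilde g}=e^{2w}\dvg$, $dS_{\tilde g}=e^w\dSg$, $\abs{dw}_{\tilde g}^2=e^{-2w}\abs{dw}_g^2$ and $e^{-w}-1=e^{-w}(1-e^w)$ gives
\beqs
\int_Me^{-w}\abs{dw}_g^2+(1+e^{-w})(e^w-1)^2\dvg+\int_{\bM}e^{-w}(e^w-1)^2\dSg\leq C\beta^2.
\eeqs

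To conclude I would add the two interior estimates: since $e^w+e^{-w}\geq2$ this bounds $\int_M\abs{dw}_g^2\dvg$ by $C\beta^2$. For the zeroth-order term I split $M$ into $\{w\geq0\}$ and $\{w\leq0\}$: on $\{w\geq0\}$ one has $(e^w-1)^2\geq w^2$, so the first estimate controls $\int_{\{w\geq0\}}w^2\dvg$; on $\{w\leq0\}$ one has $(1+e^{-w})(e^w-1)^2\geq e^{-w}(e^w-1)^2=e^w+e^{-w}-2=4\sinh^2(w/2)\geq w^2$, so the second estimate controls $\int_{\{w\leq0\}}w^2\dvg$. Hence $\norm{w}_{H^1(M,g)}^2\leq C\beta^2$, and combined with the first estimate this is \eqref{est:apriori-nl-b} with $C_6$ the largest constant that occurred. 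The uniqueness of the solution of \eqref{eq:c} is the special case $b=0$: any hyperbolic metric $\tilde g=e^{2w}g$ with $\kgt\vert_{\Gamma_i}\equiv c_i$ satisfies $\norm{w}_{H^1(M,g)}^2\leq C_6\cdot0=0$, so $w\equiv0$ and $\tilde g=g$.

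The step I expect to be the main obstacle is obtaining control of $w$ on the set where it is very negative: there the estimate coming from the Gauss curvature equation for $g$, which is what Lemma \ref{lemma:apriori}(i) provides directly, degenerates because $e^w\to0$, and a maximum principle argument only bounds $w$ from above. What makes the argument close is the symmetry of the problem under $g\leftrightarrow\tilde g$, $w\leftrightarrow-w$: applying the same a priori estimate from the side of $\tilde g$ produces the companion bound in terms of $e^{-w}$, which is precisely the information that was missing. The attendant technical point is that this requires the constants of Lemma \ref{lemma:apriori}(i) for the family of metrics $\tilde g$ to be uniform, which is why one first extracts from the $g$-estimate that the boundary lengths, and therefore (via Lemma \ref{lemma:collar} and Corollary \ref{cor:lower-bound-ell}) the full collar geometry, of $\tilde g$ stay under control once $\eps_0$ is small.
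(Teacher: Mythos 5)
Your approach is correct in outline but differs from the paper's in its key second step. Both proofs open the same way: you write $h|_{\Gamma_i}=b_ie^w=b_i(e^w-1)+b_i$, apply Lemma \ref{lemma:apriori}(i) to $w$ with respect to $g$, and absorb the $\int_{\bM}(e^w-1)^2\dSg$ term for $\eps_0$ small; this gives the bound on $\int_M e^w\abs{dw}_g^2+(e^w+1)(e^w-1)^2\dvg$. Where the two proofs diverge is in passing from that weighted estimate to an $H^1$ bound on $w$ itself. You exploit the symmetry $g\leftrightarrow\tilde g$, $w\leftrightarrow -w$: apply Lemma \ref{lemma:apriori}(i) once more, now from the side of $\tilde g$, obtain the companion estimate weighted by $e^{-w}$, and add. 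This is elegant and does close the gap (and your algebra is correct: $e^w+e^{-w}\geq2$ gives the gradient bound, $(e^w-1)^2\geq w^2$ on $\{w\geq0\}$ and $e^{-w}(e^w-1)^2=4\sinh^2(w/2)\geq w^2$ on $\{w\leq0\}$ give the $L^2$ bound). The paper instead stays entirely on the $g$ side: it factors the PDE, tests with $w-\bar w_M$ to get $\norm{dw}_{L^2}\leq C\bar b$ by Moser--Trudinger bounds on $e^{w/2}$, and then controls $\int\abs{w}$ on $\{w<-4\}$ by observing that $\Area_g(\{w<-3\})\leq\tfrac23\Area_g(M)$ after shrinking $\eps_0$, so $(w+3)_-$ vanishes on a set of definite measure and a Poincar\'e inequality applies. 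What your route buys is conceptual transparency; what the paper's route buys is that Lemma \ref{lemma:apriori}(i) needs to be applied only to the fixed metric $g$, so no uniformity in $b$ has to be established.

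One point in your justification should be corrected. You assert parenthetically that Lemmas \ref{lemma:collar}, \ref{lemma:trace2} and \ref{lemma:apriori} "remain valid verbatim for boundary curvatures in $(-1,1)$," with $Y(\ell,c)$ merely allowed to be negative. That is not accurate for Lemma \ref{lemma:collar}: when the boundary curvature of $\Gamma_i$ is negative, the circle $\{0\}\times S^1$ lies on the far side of $\Gamma_i=\{Y_i\}\times S^1$ with $Y_i<0$, so the simple closed geodesic homotopic to $\Gamma_i$ does \emph{not} lie inside $M$, and the half-collar $\Col^+(\Gamma_i)$ bounded by $\Gamma_i$ and $\gamma_i$ is not a subset of $M$. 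This can arise here since $c_i=0$ and $b_i<0$ are allowed. The gap is easy to repair, though: for any boundary component where $\kgt\vert_{\Gamma_i}=c_i+b_i\leq0$, the term $\int_{\Gamma_i}\kgt(e^{-w}-1)^2\,dS_{\tilde g}$ appearing in the proof of Lemma \ref{lemma:apriori}(i) already has the favourable sign, so no trace inequality is needed there, and the estimate \eqref{est:trace-L2} is vacuous on that component. With that case isolated, and with the boundary and geodesic lengths of $\tilde g$ controlled via your Cauchy--Schwarz estimate $\abs{L_{\tilde g}(\Gamma_i)-L_g(\Gamma_i)}\leq C\beta$ together with Corollary \ref{cor:lower-bound-ell} on the components where $c_i+b_i>0$, the constants in Lemma \ref{lemma:apriori}(i) applied to $\tilde g$ are indeed uniform for $b\in[-\eps_0,\eps_0]^k$ once $\eps_0$ is small, and the rest of your argument goes through.
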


\begin{proof}
Let $b\in [-\eps_0,\eps_0]^k$, where $\eps_0>0$ is determined later, set $\bar b=\max \abs{b_i}$ and suppose that $\tilde g=e^{2w}g$ is as in the lemma. From 
\eqref{eq:trafo-Gauss} and \eqref{eq:trafo-geod}, we obtain that $w$ solves 
\beqa
\label{eq:PDE-w-tilde-g}
-\Delta_g w&=1-e^{2w} \quad  \text{ in } M \quad \text{ with } \quad 
\nanuv{w} &=k_g (e^w-1)+b_i e^w \text{ on } \Gamma_i \quad 
\eeqa
i.e.~satisfies \eqref{eq:PDE-f-h} for 
 $f\equiv 0$ and $h\vert_{\Gamma_i}=b_i e^w$.  
 We note that $e^{w}\in H^1(M,g)$, as we may characterise $w=u_{c+b}-u_c$ as difference of smooth
 solutions of \eqref{eq:c}, so we may bound
 $I:= \int_M \abs{d w}_g^2 e^w+(e^w+1)(e^w-1)^2 \dvg +\int_{\bM}(e^w-1)^2\dSg$  using the first part of Lemma \ref{lemma:apriori} by
\beqas 
I& \leq C_4 \norm{b e^w}_{L^2(\bM,g)}^2 \leq 2C_4 \bar b^2 L_g(\bM)+2C_4 \bar b^2\int_{\bM}(e^w-1)^2\leq C\bar b^2+C\eps_0^2 I.
\eeqas 
For  $\eps_0>0$ sufficiently small this gives the bound 
$I\leq C\bar b^2$ on $I$ claimed in the lemma and it remains 
to establish the analogue bound on the $H^1$ norm of $w$. We  first note that
\beq \label{est:proof-apr-eu}
 \norm{e^{w/2}-1}_{H^1(M)}^2= \int \tfrac14 \abs{d w}_g^2 e^w+(e^{w/2}-1)^2 \dvg
 \leq I \leq  
 C\bar b^2,\eeq
 where norms are computed with respect to $g$ and integrals over $M$ unless indicated otherwise. 
In particular  $\norm{e^{w/2}}_{H^1(M)}\leq C$, and so of course $\norm{e^{3w/2}}_{L^4(M)}+\norm{e^{w}}_{L^2(\partial M)}\leq C$,  where all constants are allowed to depend on $(M,g)$ but not on $b$.
Writing \eqref{eq:PDE-w-tilde-g} in the form 
$$-\Delta_g w=(1-e^{w/2})\cdot (1+e^{w/2}+e^w+e^{3w/2}) \text{ on } M,\qquad \frac{\partial w}{\partial n_g} =c_i(e^{w/2}-1)(e^{w/2}+1)+b_ie^w \text{ on } \Gamma_i,$$
and testing this equation with $w-\bar w_M$, $\bar w_M:= \fint_M wdv_g$, thus allows us to bound 
\beqas
\norm{d w}_{L^2(M)}^2 &\leq 
C\norm{w-\bar w_M}_{L^2(M)}  \norm{1-e^{w/2}}_{L^4(M)}\cdot(1+\norm{e^{3u/2}}_{L^{4}(M)})\\
&\quad +\norm{w-\bar w_M}_{L^2(\bM)}\cdot \big[\norm{e^{w/2}-1}_{L^4(\bM)}\cdot  \norm{e^{w/2}+1}_{L^4(\bM)}+ \bar b \norm{e^{w}}_{L^2(\bM)}\big]\\
&\leq C\norm{w-\bar w_M}_{H^1(M)}\cdot \big[\norm{e^{w/2}-1}_{H^1(M)}+\bar b\big]\leq C \bar{b}\cdot  \norm{d w}_{L^2(M)}.
\eeqas 
Having thus shown that $\norm{dw}_{L^2(M)}\leq C\bar b$, it now remains to 
 show that also $\abs{\bar w_M }\leq C\bar{b}$, which of course follows if we prove that 
 $\int \abs{w}\dvg\leq C\bar{b}$. 
As $\abs{x}\leq 2e^{-\min(x/2,0)}\abs{e^{x/2}-1}$, $x\in\R$, we already obtain from 
\eqref{est:proof-apr-eu} that
$$\int_{\{w>-4\}} \abs{w}\dvg\leq 2e^2 \int_M\abs{e^{w/2}-1} \dvg \leq  C\bar{b},$$
so it remains to bound the corresponding integral over $\{w<-4\}$.
To this end we note that as $\norm{e^{w/2}}_{L^4(M)}\leq C$, we obtain from \eqref{est:proof-apr-eu} that, after reducing $\eps_0$ is necessary, 
\beqas
\int(e^w-1)^2 dv_g &\leq C \norm{e^{w/2}-1}_{H^1(M)}^2 \leq C\bar b^2 \leq C\eps_0^2 \leq \half \Area(M),\eeqas 
so
$\Area_g(\{w<-3\})\leq \Area_g(\{ (e^{w}-1)^2>\tfrac34\})\leq \tfrac43\int (e^w-1)^2dv_g
\leq \tfrac23\Area_g(M).$
Hence $v=(w+3)_{-}=\max(-(w+3),0)$ vanishes on a set of measure at least $\alpha =\tfrac13\Area_g(M)$, so the variant of the Poincar\'e inquality 
$$\norm{v}_{L^2(M)}\leq C\norm{d v}_{L^2(M)}, \qquad C=C(\al, (M,g))$$
valid for such functions $v$ implies that 
also 
\beqas
\int_{\{w<-4\}} \abs{w} \dvg\leq 4\int_{\{w<-4\}} v \dvg
\leq C \norm{d v}_{L^2} \leq C \norm{d w}_{L^2}\leq C\bar{b},
\eeqas
which completes the proof of the lemma. 
\end{proof}

Lemmas \ref{lemma:apriori} and \ref{lemma:ubc} represent the main steps in the proof of the remaining claims of Proposition  \ref{prop:existence-PDE}, which now follow by the following standard argument.

Let $S:c\mapsto  u_c\in H^1(M,g_0)$ be the map that assigns to each $c\in[0,1)^k$  the unique solution $u_c$ of \eqref{eq:PDE}. We claim that $S$ is $C^1$ 
 with $dS(c)(b)= v_{c,b}$,
 for  $v_{c,b}$ 
the unique solutions of 
\beq \label{eq:vbc}
-\Delta_{g_c} v+2v=0 \text{ in } M, \quad \text{ with }\quad \frac{\partial v}{\partial n_{g_c}}=c_iv+b_i \text{ on } \,\Gamma_i, \,i\in\{1,\ldots,k\}. \eeq
Here and in the following  $g_c=e^{2u_c}g_0$ is the unique metric satisfying \eqref{eq:c}.

Given $c\in[0,1)^k$, $b\in \R^k$, say with $\abs{b}=1$, and $\abs{\eps}\leq 1-\max c_i$, we let
$c_\eps=c+\eps b$ and set  
$w_{\eps}:=S(c_\eps)-S(c)$. As $g_{c_\eps}=e^{2S(c_{\eps})}g_0=e^{2w_{\eps}}g_c$ is hyperbolic with $k_{g_{c_\eps}}=c_\eps$, we have
\beq 
\label{eq:u_bc}
-\Delta_{g_c} w_\eps+e^{2w_\eps}-1=0 \text{ in } M \quad \text{ with } \quad \frac{\partial w_\eps}{\partial n_{g_c}}=(c_i+\eps b_i)e^{w_\eps}-c_i \text{ on } {\Gamma_i},\eeq
compare \eqref{eq:trafo-geod}. 
Hence
$\beta_\eps:=S(c_\eps)-S(c)-\eps v_{c,b}= w_\eps-\eps v_{c,b}$ 
solves   
\eqref{eq:PDE-f-h-lin} for $g=g_c$,
$f=1+2w_\eps-e^{2w_\eps}$ and $h\vert_{\Gamma_i}=c_i(e^{w_\eps}-(1+w_\eps))+\eps b_i(e^{w_\eps}-1)$, 
so for functions with
$$\abs{f}\leq
2e^{2(w_\eps)_+} \cdot (w_\eps)^2
\leq 2(1+e^{2w_\eps})\cdot w_\eps^2
\text{ and } 
\abs{h}
\leq \tfrac12 e^{(w_\eps)_+}w_\eps^2+\eps e^{(w_\eps)_+}\abs{w_\eps} 
\leq   (1+e^{w_\eps})\cdot\big[w_\eps^2+\tfrac12\eps^2\big].$$
We recall from 
 Lemma \ref{lemma:ubc} that the $H^1$ norms of $e^{w_\eps}$, $\abs{\eps}\leq \eps_0$, are uniformly bounded, and hence so are $\norm{e^{2w_\eps}}_{L^4(M)}$ and $\norm{e^{w_\eps}}_{L^4(\bM)}$. Using
 \eqref{est:apriori-l} as well as that 
 $w_\eps=\beta_\eps+\eps v_{c,b}$  we thus get
\beqas
\norm{\beta_\eps}_{H^1(M,g_c)} & \leq
C(\norm{f}_{L^2(M,g_c)}+\norm{h}_{L^2(\bM,g_c)})
\leq C \norm{w_\eps^2}_{L^4(M,g_c)}+C \norm{w_\eps^2}_{L^4(\bM,g_c)}+C\eps^2\\
 &\leq C\norm{\beta_\eps}_{H^1(M,g_c)}^2+C\eps^2(1+\norm{v_{c,b}}_{H^1(M,g_c)}^2) \leq  C\norm{\beta_\eps}_{H^1(M,g_c)}^2+C\eps^2,
\eeqas
where we use in the last step that \eqref{est:apriori-l} yields a  bound on the norm of $v_{c,b}$ that is independent of $b$. 
As we know a priori that 
$\norm{\beta_\eps}_{H^1(M,g_c)}\leq \norm{w_\eps}_{H^1(M,g_c)}+\eps \norm{v_{c,b}}_{H^1(M,g_c)} \leq C\eps$, compare Lemma \ref{lemma:ubc}, we thus conclude that  $\norm{\beta_\eps}_{H^1(M,g_c)}\leq C\eps^2$ and thus that 
$S$ is indeed Fr\'echet differentiable in $c$ with $df(c)(b)=v_{c,b}$ as claimed.

We finally remark that $v_{c,b}$ depends continuously on $c$ as can be readily seen by using that $g_{\tilde c}=e^{2(S(\tilde c)-S(c))}$ to view $v_{\tilde c,b}$  as solution of \eqref{eq:PDE-f-h-lin} for $g=g_c$, $f= 2v(1-e^{2(S(\tilde c)-S(c))})$ and $h\vert_{\Gamma_i}= b_i+ (e^{S(\tilde c)-S(c)}-1)(c_i v+b_i)+e^{S(\tilde c)-S(c)} (\tilde c_i-c_i)v$
and applying Lemmas \ref{lemma:apriori} and \ref{lemma:ubc}.

\section{Proof of the main results}
\label{sect:3}

Based on the results of Section \ref{sect:2} we can now show
the first part of Theorem \ref{thm:1}
by proving
\begin{lemma}\label{lemma:diffeo}
Let $(M,\mathtt{c})$ be a compact oriented Riemann surface with boundary curves $\Gamma_1,\ldots,\Gamma_k$ and denote by $g_c$, $c\in [0,1)^k$, the unique metric compatible to $\mathtt{c}$ for which \eqref{eq:c} holds. 
Then the map 
$f:c\mapsto (c_i\cdot L_{g_c}(\Gamma_i))_i$
is a diffeomorphism from $(0,1)^k$ to $
(\R^+)^k$. 
In particular, for every $d>0$ there exists a unique hyperbolic metric $g$ that is compatible with $\mathtt{c}$ and that satisfies \eqref{def:Md}. 
\end{lemma}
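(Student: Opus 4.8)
The plan is to realise $f$ as a \emph{proper local diffeomorphism} from $(0,1)^k$ onto $(\R^+)^k$: since $(\R^+)^k$ is simply connected and $(0,1)^k$ is connected, such a map is necessarily a one-sheeted covering, hence a diffeomorphism, and the last assertion of the lemma follows immediately by inspecting the preimage of $(d,\dots,d)$. To begin, $f$ is of class $C^1$: by Proposition \ref{prop:existence-PDE} the map $c\mapsto u_c\in H^1(M,g_0)$ is $C^1$, the trace–exponential maps in \eqref{eq:maps-MT} are smooth (indeed compact) into every $L^p$, and $f_i(c)=c_i\int_{\Gamma_i}e^{u_c}\,\dSn$. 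Writing $v_{c,b}:=\sum_j b_j\partial_{c_j}u_c$ for the derivative of $c\mapsto u_c$ in direction $b$ — which by linearity is exactly the solution of the linearised problem \eqref{eq:vbc} — differentiation gives
\beq\label{eq:df-formula}
\big(df(c)b\big)_i=b_i\,L_{g_c}(\Gamma_i)+c_i\int_{\Gamma_i}v_{c,b}\,\dSgc{c},\qquad i=1,\dots,k.
\eeq

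The heart of the argument, and the step I expect to be the main obstacle, is to show $df(c)$ is invertible for every $c\in(0,1)^k$. The off-diagonal entries $\partial_{c_j}f_i$ ($i\neq j$) carry no obvious sign, so a direct monotonicity argument for $f$ is unavailable; instead one exploits the linearised equation together with the sharp trace estimate \eqref{est:trace-L2}. Assume $df(c)b=0$. Since $c_i>0$, \eqref{eq:df-formula} gives $\int_{\Gamma_i}v_{c,b}\,\dSgc{c}=-b_i L_{g_c}(\Gamma_i)/c_i$ for each $i$; testing the equation \eqref{eq:vbc} for $v:=v_{c,b}$ against $v$ itself then yields
\beqas
\int_M\big(\abs{dv}_{g_c}^2+2v^2\big)\,\dvgc{c} &=\sum_i c_i\int_{\Gamma_i}v^2\,\dSgc{c}+\sum_i b_i\int_{\Gamma_i}v\,\dSgc{c}\\
&=\sum_i c_i\int_{\Gamma_i}v^2\,\dSgc{c}-\sum_i\frac{b_i^2\,L_{g_c}(\Gamma_i)}{c_i}.
\eeqas
As $k_{g_c}\equiv c_i$ on $\Gamma_i$, the estimate \eqref{est:trace-L2} bounds $\sum_i c_i\int_{\Gamma_i}v^2\,\dSgc{c}$ above by $(1-\eps)\int_M(\abs{dv}_{g_c}^2+2v^2)\,\dvgc{c}-\eps\sum_i\int_{\Gamma_i}v^2\,\dSgc{c}$, so that
\beqs
\eps\int_M\big(\abs{dv}_{g_c}^2+2v^2\big)\,\dvgc{c}+\eps\sum_i\int_{\Gamma_i}v^2\,\dSgc{c}+\sum_i\frac{b_i^2\,L_{g_c}(\Gamma_i)}{c_i}\le 0.
\eeqs
Every term on the left is nonnegative and the last one is positive unless $b=0$; hence $b=0$, so $df(c)$ is injective and therefore invertible.

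It remains to verify that $f$ is proper. Let $K\subset(\R^+)^k$ be compact and $c^{(n)}\in f^{-1}(K)$. No coordinate $c_i^{(n)}$ can approach $1$, for otherwise $f_i(c^{(n)})=c_i^{(n)}L_{g_{c^{(n)}}}(\Gamma_i)\ge\de\,c_i^{(n)}/\sqrt{1-(c_i^{(n)})^2}\to\infty$ by Corollary \ref{cor:lower-bound-ell}, contradicting $f(c^{(n)})\in K$; thus $c^{(n)}\in[0,1-\eta]^k$ for some $\eta>0$, on which $c\mapsto L_{g_c}(\Gamma_i)$ is continuous, hence bounded above by some $L^\ast<\infty$, and since $f_i(c^{(n)})$ is bounded below by a positive constant (as $K\subset(\R^+)^k$) we get $c_i^{(n)}=f_i(c^{(n)})/L_{g_{c^{(n)}}}(\Gamma_i)$ bounded away from $0$ as well. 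Therefore $f^{-1}(K)$ is a compact subset of $(0,1)^k$, so $f$ is proper, and by the opening remark $f$ is a diffeomorphism onto $(\R^+)^k$.

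Finally, for the stated consequence: given $d>0$, set $c:=f^{-1}(d,\dots,d)\in(0,1)^k$; then $g_c$ is a hyperbolic metric compatible with $\cc$ with $k_{g_c}\vert_{\Gamma_i}=c_i$ and $c_iL_{g_c}(\Gamma_i)=d$, i.e.\ \eqref{def:Md} holds. Conversely, if $g$ is any hyperbolic metric compatible with $\cc$ satisfying \eqref{def:Md}, then $k_g\vert_{\Gamma_i}\equiv d/L_g(\Gamma_i)$ is a positive constant, and it necessarily lies in $(0,1)$ — a compact hyperbolic surface with $\chi<0$ admits no boundary curve of constant geodesic curvature $\ge 1$, since such a curve would bound a cusp or disc region — so by Lemma \ref{lemma:existence} we have $g=g_c$ with $c=(d/L_g(\Gamma_i))_i$, and then $f(c)=(d,\dots,d)$ forces $c=f^{-1}(d,\dots,d)$, giving uniqueness.
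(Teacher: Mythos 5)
Your proposal is correct and follows essentially the same route as the paper: $C^1$ regularity of $f$ from Proposition \ref{prop:existence-PDE}, properness from Corollary \ref{cor:lower-bound-ell}, and nondegeneracy of $df(c)$ by testing the linearised equation \eqref{eq:vbc} against $v_{c,b}$ and absorbing the boundary term with the sharp trace estimate \eqref{est:trace-L2}; the covering-space argument you invoke is simply the content of Hadamard's global inverse function theorem cited in the paper. The one genuine (if modest) difference is in the final step of the nondegeneracy argument: the paper deduces a contradiction via a sign argument (there must exist $i$ with $\mathrm{sign}(b_i)\int_{\Gamma_i}v_{c,b}>0$), whereas you substitute $\int_{\Gamma_i}v_{c,b}=-b_i L_{g_c}(\Gamma_i)/c_i$ directly and exhibit a sum of manifestly nonnegative terms bounded above by zero — slightly cleaner. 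You also explicitly verify, for the uniqueness in the ``in particular'' statement, that any hyperbolic $g$ satisfying \eqref{def:Md} has $k_g\vert_{\Gamma_i}\in(0,1)$ (so that it lies in the range $c\in(0,1)^k$ where the bijectivity of $f$ applies), a point the paper leaves implicit; your justification via Lemma \ref{lemma:collar} — only circles $\{s\}\times S^1$ with curvature $\sin(\tfrac{\ell}{2\pi}s)<1$ can occur as constant-curvature boundary curves inside a collar — is the right one, and tightens the exposition.
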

\begin{proof}
We first remark that $f:c\mapsto (c_i\cdot L_{g_c}(\Gamma_i))_i$ is a $C^1$ map from $[0,1)^k$ to $(\R^+_0)^k$
as Proposition \ref{prop:existence-PDE} establishes that $c\mapsto u_c$ is $C^1$ into $H^1$,  while the trace version of the Moser-Trudinger inequality implies that $H^1(M,g_0) \ni u\mapsto \int_{\Gamma_i} e^u \dSn=L_{e^{2u}g_0}(\Gamma_i)$ is $C^1$. 

We now claim that $f:(0,1)^k\to (\R^+)^k$ is proper: To see this we first recall that 
Corollary \ref{cor:lower-bound-ell} assures that 
 $L_{g_c}(\Gamma_i)\to \infty$ as $c_i\to 1$ and hence that the preimage $f^{-1}(K)$ of any compact set $K\subset (\R^+_0)^k$ is a compact set in $[0,1)^k$. 
 As $c\mapsto L_{g_c}(\Gamma_i)$ is continuous on $[0,1)^k$ we furthermore have a uniform upper bound on each $L_{g_c}(\Gamma_i)$ for $c\in f^{-1}(K)$. 
For compact subsets $K$ of $(\R^+)^k$ 
 we hence obtain that the components 
$c_i$ of  $c\in f^{-1}(K)$ are bounded away from zero uniformly and hence that $f^{-1}(K)$ is a compact subset of $(0,1)^k$ as required. 
 
By Hadamard's global inverse function theorem, see e.g.~\cite[Chapter 6]{Inverse}, the lemma thus follows provided we show that  
\beqs
\det(df(c))\neq 0 \text{ for every } c\in (0,1)^k.
\eeqs
So suppose that there exists $c\in (0,1)^k$ so that $\det(df(c))=0$. Hence there must be some non-trivial element $b$ of the kernel of $df(c)$, i.e.~$b\in \R^k\setminus\{0\}$ so that for every $i=1,\ldots, k$
\beq 
\label{eq:to-contradict-reg}
0=df(c)(b)_i=b_iL_{g_c}(\Gamma_i)+c_i\int_{\Gamma_i} \tfrac{d}{d\eps}\vert_{\eps=0} e^{u_c+\eps v_{c,b}} \dSgc{0} =b_iL_{g_c}(\Gamma_i)+c_i\int_{\Gamma_i} v_{c,b} \dSgc{c}
 ,\eeq
where $v_{c,b}=dS(c)(b)$ is characterised by \eqref{eq:vbc}. Testing \eqref{eq:vbc} with $v_{c,b}$ and applying the trace estimate \eqref{est:trace-L2} of Lemma \ref{lemma:trace2}  however yields that 
\beqas \int_M\abs{d v_{c,b}}_{g_c}^2+2v_{c,b}^2 \dvgc{c}
=&\int_{\bM} \kgc v_{c,b}^2dS_{g_c} +\sum_i b_i\int_{\Gamma_i} v_{c,b} \dSgc{c}\\
\leq &(1-\eps) \int_M\abs{d v_{c,b}}_{g_c}^2+2v^2 \dvgc{c}+
\sum_i b_i\int_{\Gamma_i} v_{c,b} \dSgc{c}.
\eeqas
Since $v_{c,b}$ cannot vanish identically as $b\neq 0$, there hence must be at least one $i\in\{1,\ldots k\}$ with
$$\text{sign}(b_i) \int_{\Gamma_i}v_{c,b} \dSgc{c}>0$$
which contradicts \eqref{eq:to-contradict-reg} as $c_i>0$.
\end{proof}

Having thus proven that each conformal class is represented by a unique metric $g\in\M^d$, we now obtain the remaining claims of Theorem \ref{thm:1} from the following lemma which is based on Lemma \ref{lemma:collar} and Corollary \ref{cor:lower-bound-ell}.
 
 \begin{lemma} \label{lemma:col-Md}
Let $M$ be as in Theorem \ref{thm:1} and let $g\in \M^d$, $d>0$. Then there is a unique  geodesic $\gamma_i$ in $(M,g)$ homotopic to the boundary curve $\Gamma_i$, its length  
$\ell_i$ is related to the length of $\Gamma_i$ by \eqref{claim:rel-length} and $\Gamma_i$ is surrounded by a collar neighbourhood that is isometric to 
\beq \label{def:collar-Md}
\big((-X(\ell_i),\Xdb(\ell_i)]\times S^1, \rho_{\ell_i}(ds^2+d\th^2)\big) 
\text{ where }\Xdb(\ell)=\frac{2\pi}{\ell}\big(\frac\pi2- \arctan(\frac{\ell}{d})\big)
\eeq
while $X(\ell)$ and $\rho_\ell$ are as in 
 \eqref{def:rho-X}.
\end{lemma}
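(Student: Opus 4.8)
The plan is to deduce the statement directly from Lemma \ref{lemma:collar} and Corollary \ref{cor:lower-bound-ell}, the only real work being to check that these apply to an arbitrary $g\in\M^d$ and then to solve the two relations they provide for the boundary curvature in terms of $d$ and the length $\ell_i$ of the geodesic $\gamma_i$.

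So fix $g\in\M^d$. First I would note that $\kg\vert_{\Gamma_i}$ is automatically the \emph{constant} $c_i:=d/L_g(\Gamma_i)>0$, since $L_g(\Gamma_i)$ is a constant and \eqref{def:Md} holds pointwise on $\Gamma_i$. The only step that is not pure bookkeeping is to check $c_i\in(0,1)$, so that Lemma \ref{lemma:collar} and Corollary \ref{cor:lower-bound-ell} become applicable. For this I would repeat the opening of the proof of Lemma \ref{lemma:collar}, which only uses $c_i\geq 0$: as $(M,g)$ is hyperbolic and $\Gamma_i$ has positive geodesic curvature, the Dirichlet-energy minimiser among maps $S^1\to(M,g)$ in the free homotopy class of $\Gamma_i$ has its image in the interior of $M$ and is hence a simple closed geodesic $\gamma_i$, and the subsurface $\Col^+(\Gamma_i)$ bounded by $\gamma_i$ and $\Gamma_i$, being a compact hyperbolic annulus with totally geodesic boundary $\gamma_i$, embeds isometrically into the complete hyperbolic cylinder around a geodesic of length $\ell_i:=L_g(\gamma_i)$, in which every simple closed curve of constant geodesic curvature is a circle $\{s\}\times S^1$ of curvature $\sin(\tfrac{\ell_i}{2\pi}s)\in(-1,1)$ by \eqref{eq:curv-circles}. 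Hence $c_i\in(0,1)$.

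Now Lemma \ref{lemma:collar} yields the uniqueness of $\gamma_i$ together with the collar $\Col(\Gamma_i)\cong\big((-X(\ell_i),Y(\ell_i,c_i)]\times S^1,\rho_{\ell_i}^2(ds^2+d\th^2)\big)$, where $Y(\ell_i,c_i)=\tfrac{2\pi}{\ell_i}\arcsin c_i$, while Corollary \ref{cor:lower-bound-ell} --- see \eqref{est:lower-bound-ell} --- gives $L_g(\Gamma_i)=\ell_i/\sqrt{1-c_i^2}$. Combining the latter with $c_iL_g(\Gamma_i)=d$ yields
\beqs
\ell_i^2=L_g(\Gamma_i)^2\big(1-c_i^2\big)=L_g(\Gamma_i)^2-\big(c_iL_g(\Gamma_i)\big)^2=L_g(\Gamma_i)^2-d^2 ,
\eeqs
which is \eqref{claim:rel-length}, and in particular $c_i=d/\sqrt{\ell_i^2+d^2}$. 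It then only remains to identify $Y(\ell_i,c_i)$ with $\Xdb(\ell_i)$, which follows from the elementary identities $\arcsin\!\big(d/\sqrt{\ell^2+d^2}\big)=\arctan(d/\ell)$ and $\arctan(d/\ell)+\arctan(\ell/d)=\tfrac\pi2$ for $d,\ell>0$:
\beqs
Y(\ell_i,c_i)=\tfrac{2\pi}{\ell_i}\arcsin\!\Big(\tfrac{d}{\sqrt{\ell_i^2+d^2}}\Big)=\tfrac{2\pi}{\ell_i}\Big(\tfrac\pi2-\arctan\tfrac{\ell_i}{d}\Big)=\Xdb(\ell_i) .
\eeqs
Since $X(\ell_i)$ and $\rho_{\ell_i}$ are unchanged from \eqref{def:rho-X}, this is exactly \eqref{def:collar-Md}.

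The one step I expect to require any thought is the verification $c_i\in(0,1)$ for an arbitrary member of $\M^d$, i.e.~that the hypothesis of Lemma \ref{lemma:collar} is automatically met; it is standard hyperbolic geometry, but it is what reduces the proof to the bookkeeping above and a single trigonometric identity. (If one prefers, one can instead simply read Lemma \ref{lemma:diffeo} as saying that $\M^d$ consists of the metrics $g_c$ with $c\in(0,1)^k$, in which case even this step is unnecessary.)
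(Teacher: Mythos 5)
Your proposal is correct and follows the paper's argument closely: both deduce the collar description from Lemma \ref{lemma:collar}, the length relation $\ell_i^2=(1-c_i^2)L_g(\Gamma_i)^2$ from Corollary \ref{cor:lower-bound-ell}, and then identify $\Xdb(\ell_i)$ with $Y(\ell_i,c_i)$ by a trigonometric identity (the paper via $\tan$, you via $\arcsin$/$\arctan$ --- equivalent). Your explicit verification that $c_i\in(0,1)$ for an arbitrary $g\in\M^d$, so that Lemma \ref{lemma:collar} and Corollary \ref{cor:lower-bound-ell} are applicable, is a point the paper leaves implicit (alternatively one reads it off from Lemma \ref{lemma:diffeo}, as you observe), and is a welcome precision.
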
 
  \begin{proof}
  The existence of such a geodesic was proven in Lemma \ref{lemma:collar} and the relation between $\ell_i=L_g(\gamma_i)$ and $L_i=L_g(\Gamma_i)$ follows from Corollary \ref{cor:lower-bound-ell} which implies that 
$\ell_i^2=(1-(k_g\vert_{\Gamma_i})^2)L_i^2=L_i^2-d^2.$
From Lemma \ref{lemma:collar} we then obtain that the boundary curve is surrounded by a collar as described in the above lemma where we know that $\Xdb$ must be so that
$k_g\vert_{\Gamma_i}=\sin(\frac{\ell}{2\pi}\Xdb(\ell_i))$. Combined with 
\eqref{est:lower-bound-ell} this yields the condition 
$\frac{d}{\ell_i}=\frac{k_g\vert_{\Gamma_i} L_i}{\sqrt{1-(k_g\vert_{\Gamma_i})^2}L_i}=\tan(\frac{\ell}{2\pi}  \Xdb(\ell_i))$, so $\Xdb(\ell_i)$ must be given by \eqref{def:collar-Md}.
\end{proof}

\begin{rmk}\label{rmk:L-to-infty}
We remark that while  $X(\ell)$ and $\Xdb(\ell)$ have a similar asymptotic behaviour as  $\ell\to 0$
the behaviour of $X(\ell)$ and $\Xdb(\ell)$ as $\ell\to \infty$ is very different, with 
$X(\ell)$ decaying exponentially, $X(\ell)\leq C\ell^{-1} e^{-\ell/2}$, while $\Xdb(\ell)$ is of order $\ell^{-2}$ for large $\ell$. This difference is significant due to its effect on the  Teichm\"uller space  and its completion with respect to the  corresponding Weil-Petersson metric, which will be discussed in more detail in future work. 
To illustrate this, we note that for the corresponding metrics $G_\ell$ on cylinders (which were considered in \cite{R-cyl} and are isometric to $([-\Xdb(\ell),\Xdb(\ell)]\times S^1,\rho^2(ds^2+d\th^2)$)
the change of the length of the central geodesic is controlled by
$\frac{d\ell}{dt}\leq C\ell \norm{\partial_t G_{\ell(t)}}_{L^2}$ if $\ell$ is large, which excludes the possibility that $\ell\to \infty$ along a curve of metrics of finite $L^2$-length. In this case we thus know that while the completion of the Teichm\"uller space includes the punctured limit obtained as $\ell\to 0$, it does not include any limiting object corresponding to $\ell$ becoming unbounded.
\end{rmk}

\begin{rmk}\label{rmk:horizontal}
We note that the quantity $L_g(\Gamma_i)^2-L_g(\gamma_i)^2$ appears naturally also for horizontal curves of hyperbolic metrics on closed surfaces. Such curves move orthogonally to the action of diffeomorphisms and hence satisfy $\pt \hat g =\Rea(\Om)$ for 
holomorphic quadratic differentials $\Om$ on $(M,\hat g)$.
Given a simple closed geodesic $\gamma\subset (M,\hat g)$ and a 
  closed curve $\Gamma\subset (\Col(\gamma),\hat g)$ with  constant geodesic curvature, which is hence described  by some $\{s\}\times S^1$ in collar coordinates, we can use the  Fourier expansion of $\Om=\sum_{j\in\Z} b_j e^{j(s+\i\th)}dz^2$ to obtain that 
\beqas
 \frac{d}{dt} \big[ L_{\hat g(t)}(\Gamma)^2-L_{\hat g(t)} (\gamma)^2\big]
&=  L_{\hat g}(\Gamma)\int_{\{s\}\times S^1} \frac{(\pt \hat g)_{\th\th}}{\sqrt{\hat g_{\th\th}}} d\th
-L_{\hat g}(\gamma)\int_{\{s\}\times S^1} \frac{(\pt \hat g)_{\th\th}}{\sqrt{\hat g_{\th\th}}} d\th \\
&=- 2\pi\Rea\big(\sum_{j} b_j (e^{js}-1) \int_{S^1} e^{\i j \th} d\th\big)=0.
\eeqas
\end{rmk}

We finally give the proof of the analogue of the Deligne-Mumford compactness result for our class of metrics that we stated in Theorem \ref{thm:DM}. This proof is based on the proof of the corresponding 
result  for surfaces with geodesic boundary curves as carried out in \cite[Sect. IV.5]{Hu}.
For part of this proof it will be more convenient to work with so called Fermi coordinates $(x,\th)$ instead of collar coordinates $(s,\th)$ on a collar $\Col(\si)$ around a simple closed geodesic $\si$. As indicated the angular components of these two different sets of coordinates agree, while the $x$ coordinate of a point $p\in\Col(\si)$ is given as the signed distance $x=x(s)=\text{dist}_g(p,\si)$ 
to $\si$.  

\begin{proof}[Proof of Theorem \ref{thm:DM}]
For $(M,g^{(j)})$ as in the theorem we denote by $\gamma^{(j)}_i$, $i=1,\ldots,k$,  the (unique) geodesics 
in  $(M,g^{(j)})$ 
that are  homotopic to $\Gamma_i$, and note that their lengths are bounded from above by $\ell_i^{(j)}:=L_{g^{(j)}}(\gamma_i^{(j)})\leq L_{g^{(j)}}(\Gamma_i)\leq C$. We can thus pass to  subsequence so that $\ell_i^{(j)}\to \ell_i^\infty$ as $j\to \infty$ for each $i=1,\ldots, k$, where, after relabelling,  we may assume that 
$\ell_i^\infty=0$ for $1\leq i\leq \kappa_1$ while $\ell_i>0$ for $\kappa_1+1\leq i\leq k 
$ for some $\kappa_1\in\{0,\ldots, k\}$.
As above we let $\Col^+(\Gamma_i,g^{(j)})$, $i=1,\ldots,k$, be the halfcollars that are bounded by $\Gamma_i$ and the corresponding geodesic $\gamma^{(j)}_i\subset (M,g^{(j)})$ and set $ M^{(j)}:=M\setminus \bigcup_i \Col^+(\Gamma^{(j)}_i)$.

As $(M^{(j)},g^{(j)})$ is a sequence of hyperbolic surfaces with geodesic boundary we can apply the version of the Deligne-Mumford compactness theorem as found in \cite[Prop.~5.1]{Hu}, or alternatively first double the surface and then apply the version of Deligne-Mumford for closed surfaces that is recalled e.g.~in \cite[Prop. A.3]{RT-neg}. After passing to a subsequence we thus obtain 
a collection 
 $\mathscr{E}^{(j)}=\{\sigma_i^{(j)}, i=1,\ldots,\kappa_2\}$, $\ka_2\in\{0,\ldots,3(\gamma-1)+k\}$, of simple closed geodesics in the interior of $(M^{(j)},g^{(j)})$ whose lengths tend to zero, so that for the 
  surfaces 
  $\Si^{(j)}= M^{(j)}\setminus \big(\bigcup_{i=1}^{\kappa_2} \sigma_i^{(j)}\cup\bigcup_{i=1}^{\kappa_1} \gamma_i^{(j)}\big)$, which have with $\kappa_1+2\kappa_2$ punctures, the following holds true: There exist a complete hyperbolic
metric $\hat g_\infty$ on $\hat \Si:=\Si^{(1)}$ and  diffeomorphisms 
$\hat f_j: \hat \Si\to \Si^{(j)}$
which map
$\gamma_i^{(1)}$ to $\gamma_i^{(j)}$, $i\geq \kappa_1+1$, as well as neighbourhoods of each 
puncture (respectively of each pair of punctures) of $(\Si^{(1)},g_\infty)$ obtained by collapsing one the $\gamma_i^{(1)}$, $1\leq j\leq \kappa_1$ (respectively one of the $\si_i^{(1)}$) to a neighbourhood of the corresponding puncture (respectively pair of punctures) on $\Si^{(j)}$, 
so that
$$\hat f_j^{*}g^{(j)}\to \hat g_\infty \text{ smoothly locally on }\hat \Si= \Si^{(1)}.$$
Furthermore, for $j$ sufficiently large we can modify these diffeomorphisms 
as described in the proof of Claim 3 on p.~75 of \cite{Hu} to ensure that 
$\hat f_j: (\hat \Si,g_\infty) \to (\Si^{(j)},g^{(j)})$ is given in a neighbourhood of  $\bigcup_{i=\kappa_1+1}^k\gamma^{(1)}_i$ by the identity in the respective Fermi coordinates.

In slight abuse of notation we now denote by $\Col^+(\ell)$, $\ell\geq 0$, the unique hyperbolic half-collar which has one boundary curve of constant geodesic curvature and length $L$, where $L^2-\ell^2=d^2$, while the other boundary curve is a geodesic of length $\ell$ if $\ell>0$, respectively degenerated to a hyperbolic cusp if $\ell=0$. 
We then construct the limit surface $(\Si,g_\infty)$ out of the limiting surface $(\hat \Si,\hat g_\infty)$ with geodesic boundary obtained above and the half collars $\Col^+(\ell_i^\infty)$,  
$i=1,\ldots ,k$, by gluing the nondegenerate half-collars  $\Col^+(\ell_i^\infty)$, $i\geq \kappa_1+1$, to $\hat\Si$ along the corresponding non-collapsed boundary curves of $(\hat \Si,\hat g_\infty)$, and adding the degenerate collars  $\Col^+(\ell_i^\infty)$, $i\leq \kappa_1$, as additional connected components of $(\Si,g_\infty)$. 
As the connected components of  $M\setminus \bigcup_{i=1}^{\kappa_1}\gamma^{(j)}_{i}$ are given by $M\setminus \bigcup_{i=1}^{\kappa_1} \Col^+(\ell_i^{(j)})$ and 
a collection $\{\Col^+(\ell_i^{(j)})\}_{i=1}^{\kappa_1}$  of degenerating halfcollars, we can now extend  
the diffeomorphisms $\hat f_j$ obtained above to the required diffeomorphisms
$$f_j:\Si\to  M\setminus \bigg(\bigcup_{i=1}^{\kappa_1}\gamma^{(j)}_{i}\cup \bigcup_{i=1}^{\kappa_2}\si^{(j)}_{i}\bigg)$$
as follows: 
The degenerated 
connected components $\Col^+(\ell_i^{\infty}=0)$,
$1\leq i\leq \kappa_1$, 
 which are isometric to 
$([0,\infty)\times S^1, \rho_{\ell=0}^2(ds^2+d\th^2))$, $\rho_0(s)=\frac{1}{d+s}$, are mapped to the degenerating half-collars $\Col^+(\ell_i^{(j)})$ in $(M,g^{(j)})$ which are bounded by $\gamma_i^{(j)}$ and $\Gamma_i^{(j)}$ with $f_j$ chosen so
that it is given in collar coordinates by a bijection from $[0,\infty)\times S^1$ to $(0,\Xdb(\ell_i^{(j)})]\times S^1$ 
with $f_j(s,\th)=(\Xdb(\ell_i^{(j)})-s,\th)$ on domains  which exhaust $[0,\infty)\times S^1$, say for $s\in [0,\half \Xdb(\ell_i^{(j)})]$. As $\rho_{\ell}(\bar X_{d}(\ell_i^{(j)})-\cdot)\to \rho_{0}(\cdot) $ locally uniformly on $[0,\infty)$ as $\ell\to 0$, this ensures that the pulled back metrics converge on every compact subset of these connected components of the limit surface as required.

Finally we extend $f_j$ to the collars that we glued to $\hat\Si$ as follows: We let
 $w_i^{+,\infty}$ and $w_i^{+,(j)}$, $i\geq \kappa_2+1$, be the width of the half-collars  $\Col^+(\ell_i^{\infty})$ and  $ \Col^+(\ell_i^{(j)})$, i.e.~the geodesic distance between the two boundary curves, and note that $w_i^{+,(j)}\to w_i^{+,\infty}$ since $\ell_i^{(j)}\to \ell_i^\infty$. We may thus choose smooth bijections
$\phi_i^{(j)}:[0,w_i^\infty]\to [0,w_i^{(j)}]$ which agree with the identity in a neighbourhood of $0$ and converge to the identity as $j\to \infty$. Since  $\hat f_j:\hat\Si\to \hat\Si^{(j)}$ is given by the identity in Fermi-coordinates near the boundary curves, we may extend it to a smooth diffeomorphism on $\Si$ for which 
 $f_j^*g^{(j)}$ converges as claimed in Theorem \ref{thm:DM}
by defining $f_j$ on 
$\Col^+(\ell_i^{\infty})$ by  $f_j(x,\theta)=(\phi_i^{(j)}(x),\theta)$ in Fermi coordinates.
\end{proof}
\section{Appendix}
We finally include a proof of Remark \ref{rmk:cylinder-Z}, which uses that the Dirichlet energy $E(u)=\half \int \abs{du}_g^2 dv_g$ is conformally invariant and hence well defined for functions on Riemann surfaces.
\begin{proof}[Proof of Remark \ref{rmk:cylinder-Z}]
If $M$ has at least two boundary curves, we use that
the harmonic function $\bar f$ which is $1$ on $\Gamma_i$ and zero on all other boundary components, minimimises $E$ among all functions with the same boundary data. 
Given a cylindrical neighbourhood of $\Gamma_i$ as in the remark, we thus have
$E(f)= \frac{\pi}{Z}\geq \de:=E(\bar f)$
for the function $f$ which is linear on $U\sim (0,Z]\times S^1$ and zero elsewhere, and thus $Z\leq \frac{\pi}{\de}$.

If $M$ has only one boundary curve, we fix instead some   curve $\si$ with endpoints on the boundary curve $\Gamma$ so that $\si$ is
homotopically non-trivial with respect to variations by curves with endpoints on $\Gamma$. Then there exists a simple closed curve $\gamma$ in the interior of $M$ so that any curve $\si'$ which is homotopic to $\si$ (with endpoints on $\Gamma$) must intersect any curve $\gamma'$ that is homotopic to $\gamma$. 
We claim that there exists some $\de>0$ so that $E(f)\geq \de$ for all functions $f:M\to \R$ which are equal to $1$ on $\Gamma$ and for which there is a curve $\gamma'$ homotopic to $\gamma$ so that $f\vert_{\gamma'}\leq 0$. As in the first case, this will then imply that $Z\leq \frac{\pi}{\de}$.
To prove the claim, we fix a neighbourhood $V$ of the fixed curve $\si$ that is diffeomorphic (but not necessarily conformal) to some rectangle $R=[-c,c]\times [-b,b]$, say with $\si$ corresponding to $\{0\}\times [-b,b]$ and with $V\cap \Gamma$ corresponding to $[-c,c]\times \{-b\}\cup [-c,c]\times \{b\}$ for the chosen diffeomorphism $\phi :R\to V$ and fix some smooth metric $g$ on $M$ that is compatible to $\cc$. 
Using that $\phi^*g$ is equivalent to the euclidean metric on $R$ we obtain that there exists $c_0>0$ (allowed to depend on the above construction) so that for any $f:M\to \R$ as considered above and $\tilde f:=f\circ \phi$ 
\beqas
E(f)&
\geq \half
\int_R \abs{d\tilde f}_{\phi^*g}^2 dv_{\phi^*g}\geq c_0 \int_R \abs{\partial_x \tilde f}^2+\abs{\partial_y \tilde f}^2 dx\,dy \geq c_0 cb^{-1} \inf_{a\in [-c,c]} \big(\int_{\{a\}\times [-b,b]} \abs{\partial_y\tilde f} dy\big)^2.
\eeqas 
As the curves $\phi(\{a\}\times [-b,b])$, $a\in[-c,c]$ are homotopic to $\si$ and thus intersects the curve $\gamma'$ for which $f\vert_{\gamma'}\leq 0$,  while $\tilde f(a, \pm b)=1$, we thus get $E(f)\geq \de:= 4c_0cb^{-1}>0$ as claimed.
\end{proof}

{\sc 
Mathematical Institute, University of Oxford, Oxford, OX2 6GG, UK


\begin{thebibliography}{99}
\parskip 1pt
\setlength{\itemsep}{1pt plus 0.3ex}
\bibitem{Berger} M.S. Berger,  \emph{Riemannian structures of prescribed Gaussian curvature for compact 2-manifolds}, J. Differential Geom.  \textbf{5} (1971), 325--332.

\bibitem{Borer-G-Struwe}
F. Borer, L. Galimberti, M. Struwe, \emph{ "Large" conformal metrics of prescribed Gauss curvature on surfaces of higher genus}, Comment. Math. Helv. \textbf{90} (2015), 407--428.


\bibitem{Brendle-1} S. Brendle, \emph{Curvature flows on surfaces with boundary}, Math. Ann. \textbf{324} (2002), 491--519. 


\bibitem{Brendle-2} S. Brendle, \emph{A family of curvature flows on surfaces with boundary}, Math. Z. \textbf{241} (2002), 829--869. 

\bibitem{Reto-Melanie} R. Buzano and M. Rupflin, \emph{Smooth long-time existence of harmonic Ricci flow on surfaces}, J. Lond. Math. Soc. \textbf{95} (2017),  277--304.


\bibitem{Chang-Yang} S.Y.A. Chang, P. Yang, \emph{Conformal deformation of metrics on $S^2$}, 
J. Differential Geom. \textbf{27} (1988), 259--296. 

\bibitem{Chen} W. Chen and C. Li, \emph{Gaussian curvature in the negative case}, Proc. Amer. Math. Soc. \textbf{131} (2003) 741--744.

\bibitem{Cherrier} P. Cherrier, \emph{ Probl\`emes de Neumann non lin\'eaires sur les vari\'eet\'es riemanniennes}, Journal of Functional Analysis, \textbf{57} (1984), 154--206.


\bibitem{Ding-Li-Liu} W. Ding, J. Li and Q. Liu, \emph{Evolution of minimal torus in Riemannian manifolds}, Invent. Math. \textbf{165} (2006) 225--242.
 
 \bibitem{Eells-Sampson} J. Eells and J. H. Sampson, \emph{Harmonic mappings of Riemannian manifolds}, Amer. J. Math. \textbf{86} (1964), 109--160.
%

 \bibitem{Hu} C. Hummel: \emph{Gromov's compactness theorem for pseudo-holomorphic curves}, Progress in Mathematics, \textbf{151},
 Birkh\"{a}user Verlag, Basel, (1997), viii+131 pp.
%

\bibitem{Inverse}  S. Krantz  and H. Parks \emph{The Implicit Function Theorem, History, Theory and Applications},  Birkh\"auser Boston, Inc., Boston, MA, (2002).
%
\bibitem{Kazdan-Warner} 
J. Kazdan and F. W. Warner, \emph{ Curvature functions for compact 2-manifolds}, Ann. of Math. \textbf{99} (1974), 14--47. 


\bibitem{Li-Liu-MT} 
X. Li, P. Liu, \emph{ A Moser-Trudinger inequality on the boundary of a compact Riemann surface}, Math. Z. \textbf{250} (2005), 363--386. 


\bibitem{Mazzeo-Taylor} R. Mazzeo, M. Taylor, Michael \emph{Curvature and uniformization}, Israel J. Math. \textbf{130} (2002), 323--346. 

\bibitem{Reto} R. M\"uller, \emph{Ricci flow coupled with harmonic map flow},
Ann. Sci. Éc. Norm. Supér.  \textbf{45} (2012), 101--142

\bibitem{Sarnak}
B. Osgood, R. Phillips, P. Sarnak, \emph{Extremals of determinants of Laplacians}, Journal of Functional Analysis
\textbf{80} (1988), 148--211.

 \bibitem{randol} B. Randol, \emph{Cylinders in Riemann surfaces},
 Comment. Math. Helvetici \textbf{54} (1979) 1--5.


 \bibitem{RT} M. Rupflin and P. M. Topping, \textit{Flowing maps to minimal surfaces},  Amer. J. Math. \textbf{138} (2016),  1095--1115.

 \bibitem{RT-neg} M. Rupflin and P.M. Topping, 
 \emph{Teichmu\"uller harmonic map flow into nonpositively curved targets}, J. Differential Geom. \textbf{108} (2018), 135--184.
 
\bibitem{RT-global} M. Rupflin and P.M. Topping, \emph{Global weak solutions of the Teichm\"uller harmonic map flow into general targets}, To appear in Analysis and PDE, preprint (2017) \url{http://arXiv:1709.01881}
 
 
\bibitem{R-cyl} M. Rupflin, \emph{ Teichm\"uller harmonic map flow from cylinders},  Math. Ann. \textbf{368} (2017), no. 3-4, 1227--1276.

\bibitem{Schoen-Yau} R. Schoen, S.T. Yau \emph{Lectures on differential geometry}, International Press, Cambridge, MA, (1994). 

\bibitem{Struwe-flow}
M. Struwe \emph{A flow approach to Nirenberg's problem}, Duke Math. J. \textbf{128} (2005), no. 1, 19--64.


\bibitem{Moser-Trudinger} N. Trudinger, \emph{On imbeddings into Orlicz spaces and some applications}, J. Math. Mech. \textbf{17} (1967) 473--483.
\end{thebibliography}
\end{document}